\newtheorem{theorem}{Theorem}
\newtheorem{question}{Question}
\newtheorem{corollary}{Corollary}
\newtheorem{lemma}{Lemma}
\newtheorem{remark}{Remark}
\newtheorem{observation}{Observation}
\newtheorem{operation}{Operation}
\newtheoremstyle{case}{}{}{}{}{}{:}{ }{}
\theoremstyle{case}
\title{Kempe Changes in $H$-Free Graphs}
\author{
  Manoj Belavadi\thanks{%
    Department of Mathematics, Wilfrid Laurier University, Waterloo, ON, Canada, N2L 3C5. Supported by NSERC grant RGPIN-2025-05846. Emails: 
    \href{mailto:mbelavadi@wlu.ca}{mbelavadi@wlu.ca}, 
    \href{mailto:kcameron@wlu.ca}{kcameron@wlu.ca}.}
  \and 
  Kathie Cameron\footnotemark[\value{footnote}]
}
\date{\today}
\begin{document}
\maketitle

\begin{abstract}

Given a $k$-colouring of a graph $G$ and two of the colours, a {\em Kempe chain} is a connected component of the subgraph of $G$ induced by the vertices coloured with one of these two colours. A {\em Kempe swap} changes one colouring into another by interchanging the colours of the vertices in a Kempe chain. Two colourings are {\em Kempe equivalent} if each can be obtained from the other by a series of Kempe swaps; the set of Kempe equivalent colourings is called a {\em Kempe class}. For a graph $G$, let $\chi(G)$ denote its chromatic number and let  $\mathcal{C}_{k}(G)$ denote the set of all $k$-colourings of $G$. We say $G$ is \emph{Kempe connected} if for all $k\ge \chi(G)$, $\mathcal{C}_{k}(G)$ forms 
a Kempe class. For a graph $H$, graph $G$ is called $H${\em-free} if no induced subgraph of $G$ is isomorphic to $H$. 

We prove that every $H$-free graph is Kempe connected if and only if $H$ is an induced subgraph of the path on four vertices, $P_4$. 

The graph 2$K_2$ consists of four vertices and two edges which are not adjacent. We prove that for all 
$p \ge 0$, 
there is a $k$-colourable 2$K_2$-free graph $G$ such that $\mathcal{C}_{k+p}(G)$ does not form a Kempe class.


\medskip
\noindent
\textbf{Keywords}: Kempe chain, Kempe swap, forbidden induced subgraph, $k$-colouring, frozen colouring, $P_4$-free graph, $2K_2$-free graph.
\end{abstract}

\section{Introduction}

In this paper, all graphs are finite and simple. For a positive integer $k$, a $k$-colouring of $G$ is a mapping from the vertex-set $V(G)$ of $G$ to a set of colours $\{1,2,\dots,k\}$ such that no pair of adjacent vertices receive the same colour. A graph $G$ is called $k$-colourable if it admits a $k$-colouring, and the \emph{chromatic number} of $G$, denoted $\chi(G)$, is the minimum number of colours required to colour $G$. Let $\mathcal{C}_{k}(G)$ denote the set of all $k$-colourings of $G$. A set of vertices of a graph $G$ is called an {\em independent set} if no two of them are adjacent. A colouring of a graph partitions the vertices into independent sets called {\em colour classes}. The subgraph of a graph $G=(V,E)$ {\em induced} by $U \subseteq V$ is the graph with vertex-set $U$ and edge-set all edges of $G$ with both ends in $U$. 

Given a $k$-colouring of a graph $G$ and two of the colours, a {\em Kempe chain} is a connected component of the subgraph of $G$ induced by the union of the two corresponding colour classes. A {\em Kempe swap} changes one colouring of $G$ into another by interchanging the colours of the vertices in a Kempe chain. Two colourings are {\em Kempe equivalent} if each can be obtained from the other by a series of Kempe swaps; the set of Kempe equivalent colourings is called a {\em Kempe class}. 
We say a graph $G$ is   \emph{Kempe connected} if for all $k\ge \chi(G)$, $\mathcal{C}_{k}(G)$ 
forms a Kempe class.

Kempe swaps were introduced by Kempe in his incorrect proof of the Four Colour Theorem \cite{Kempe}. They are used in the proof of the Five Colour Theorem and were important in the first and second proofs of the Four Colour Theorem \cite{4CThm, 4CThmRSST}. They were used by Meyniel and Las Vergnas to prove that all 5-colourings of a planar graph \cite{Meyniel} and, more generally,  of a $K_5$-minor-free graph \cite{LVMeyniel} are Kempe equivalent. They are also useful in the study of edge-colouring (see for example,  \cite{ECSurvey, KempeForEC, KiersteadEC, Narboni, Vizing64, Vizing65}). For more background and problems on Kempe swaps, see a survey by Mohar \cite{Mohar} and a recent paper by Cranston and Feghali \cite{CranstonFeghali}.

For a graph $H$, graph $G$ is called $H$-{\em free} if no induced subgraph of $G$ is isomorphic to $H$. For a collection $\mathcal{H}$ of graphs, graph $G$ is $\mathcal{H}$-{\em free} if $G$ is $H$-free for every $H \in \mathcal{H}$; $\mathcal{H}$-free classes of graphs are called {\em hereditary}. Let $P_n$, $C_n$, and $K_n$ denote the path, cycle, and complete graph on $n$ vertices, respectively. The complete graph $K_3$ is also known as a {\em triangle}. The {\em complement} $\overline{G}$ of a graph $G$  is obtained from $G$ by interchanging edges and non-edges. For two vertex-disjoint graphs $G$ and $H$, the {\em disjoint union} of $G$ and $H$, denoted by $G + H$, is the graph with vertex-set $V(G) \cup V(H)$ and edge-set $E(G) \cup E(H)$. For a positive integer $r$, 
the disjoint union of $r$ copies of $G$ is denoted by $rG$. For a vertex $v$ of graph $G$, the
open neighbourhood of $v$, denoted $N(v)$, is the set of vertices adjacent to $v$ in $G$. The closed neighbourhood $N[v]$ of $v$ is 
$N(v) \cup \{v\}$.

Hereditary classes $\mathcal{G}$ of graphs where every graph in the class is  known to be Kempe connected include chordal graphs \cite{MoritzChordal}, bipartite graphs \cite{Mohar}, and $P_4$-free graphs (which are also called   cographs)  \cite{BonamyCographsKempeConn}. The last class is most important for this paper:

\begin{theorem}[\cite{BonamyCographsKempeConn}] \label{thm:p4-free}
    Every $P_4$-free graph is Kempe connected.
\end{theorem}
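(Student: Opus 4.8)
The plan is to prove directly, by induction on $|V(G)|$, that for every cograph ($P_4$-free graph) $G$ and every integer $k\ge\chi(G)$, any two $k$-colourings of $G$ are Kempe equivalent; this is exactly the statement that $G$ is Kempe connected. The base case $|V(G)|=1$ is immediate: a vertex $v$ coloured $i$ is turned into the same vertex coloured $j$ by the Kempe swap on the singleton chain $\{v\}$ in colours $i,j$. For the inductive step I would use the standard recursive description of cographs: a cograph on at least two vertices is either disconnected, say $G=G_1+G_2$, or has disconnected complement, in which case $G$ is the \emph{join} of two smaller cographs $G_1$ and $G_2$ (meaning $V(G)=V(G_1)\cup V(G_2)$ and every vertex of $G_1$ is adjacent to every vertex of $G_2$), so that $\chi(G)=\chi(G_1)+\chi(G_2)$.

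The disconnected case $G=G_1+G_2$ is easy. Since there are no edges between $V(G_1)$ and $V(G_2)$, every Kempe chain of $G$ lies entirely inside $G_1$ or entirely inside $G_2$, and conversely every Kempe swap performed inside $G_1$ (respectively $G_2$) is a Kempe swap of $G$ leaving the other part unchanged. As $k\ge\chi(G)\ge\chi(G_i)$, the induction hypothesis applied to $G_1$ lets us recolour so that a given $k$-colouring $c$ agrees on $V(G_1)$ with a target $k$-colouring $c'$, and then the induction hypothesis applied to $G_2$ finishes the job on $V(G_2)$.

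The join case carries all the weight, and its engine is one observation: writing $V_i$ for the set of vertices coloured $i$, the fact that every vertex of $V(G_1)$ is adjacent to every vertex of $V(G_2)$ forces the set of colours used on $V(G_1)$ to be disjoint from the set used on $V(G_2)$, and moreover, if $x$ is used only on $V(G_1)$ and $y$ only on $V(G_2)$, then $G[V_x\cup V_y]$ is a nonempty complete bipartite graph, hence connected, so a single Kempe swap on $\{x,y\}$ interchanges these two classes and thereby moves colour $x$ to the $G_2$-side and $y$ to the $G_1$-side; and if $y$ is instead an unused colour, then $V_x$ is an independent set no vertex of which has a $y$-coloured neighbour, so $V_x$ can be recoloured to $y$ one vertex at a time. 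Using these moves I would argue, for fixed $k$-colourings $c$ and $c'$, in four steps: \emph{(1)} apply the induction hypothesis inside $G_1$, with the (at least $\chi(G_1)$ many) colours currently occupying $V(G_1)$, to recolour $c$ so that $V(G_1)$ uses exactly $\chi(G_1)$ colours, and likewise for $c'$; \emph{(2)} use the transposition/recolouring moves above to make $V(G_1)$ use precisely $\{1,\dots,\chi(G_1)\}$ in both $c$ and $c'$ (each move strictly shrinks the symmetric difference between the current $G_1$-palette and $\{1,\dots,\chi(G_1)\}$); \emph{(3)} now the colours appearing on $V(G_2)$ all lie in $\{\chi(G_1)+1,\dots,k\}$, a set of size $k-\chi(G_1)\ge\chi(G_2)$, so the induction hypothesis inside $G_2$ with this colour set makes $c$ agree with $c'$ on $V(G_2)$; \emph{(4)} the induction hypothesis inside $G_1$ with colour set $\{1,\dots,\chi(G_1)\}$ then makes $c$ agree with $c'$ on $V(G_1)$, so $c=c'$. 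At each application one checks the routine but needed point that a Kempe swap executed inside $G_i$ using only colours absent from the other part is a genuine Kempe swap of $G$, because the colour classes involved are then subsets of $V(G_i)$.

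I expect the join case to be the only real obstacle, and within it the bookkeeping that keeps the moves from interfering: the complete-bipartite-chain observation is what makes colours mobile between the two sides, while the chosen order of the four steps --- shrink each palette, normalise the $G_1$-palette, repair $G_2$, repair $G_1$ --- is what ensures that each step preserves what the earlier ones achieved.
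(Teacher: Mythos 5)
The paper does not prove this statement at all: Theorem \ref{thm:p4-free} is imported from \cite{BonamyCographsKempeConn} with no argument given, so there is no internal proof to compare against. Your proposal is a correct, self-contained proof, and it follows what is essentially the standard (and, in substance, the cited reference's) route: induction over the cograph recursion, with the disjoint-union case handled by locality of Kempe chains and the join case handled by the observation that the two sides use disjoint palettes. The load-bearing points all check out: for a join, $\chi(G)=\chi(G_1)+\chi(G_2)$; a class $V_x\subseteq V(G_1)$ and a class $V_y\subseteq V(G_2)$ induce a connected complete bipartite graph, so one swap transposes them; an unused colour can absorb a class by singleton swaps; and a swap between two colours both absent from $V(G_{3-i})$ has its chain entirely inside $G_i$, so it is a legitimate swap of $G$. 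Your step (1) (shrinking the $G_1$-palette to exactly $\chi(G_1)$ colours before normalising it to $\{1,\dots,\chi(G_1)\}$) is genuinely necessary --- without it the set $T\setminus S$ in your step (2) could be empty while $S\setminus T$ is not --- and your termination argument via the symmetric difference is sound. The only cosmetic remark is that you implicitly use that Kempe equivalence is an equivalence relation when you normalise $c$ and $c'$ separately and meet in the middle, which is immediate from the definition since swaps are involutions.
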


In this paper, we prove: 

\begin{theorem}
\label{thm:p4-free only}
Every $H$-free graph is Kempe connected if and only if $H$ is an induced subgraph of $P_4$. 
\end{theorem}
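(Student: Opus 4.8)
\emph{Proof proposal.} The \emph{if} direction is immediate from Theorem~\ref{thm:p4-free}: if $H$ is an induced subgraph of $P_4$, then every graph containing an induced $P_4$ also contains an induced $H$, so every $H$-free graph is $P_4$-free and hence Kempe connected.

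For the \emph{only if} direction I argue the contrapositive: assuming $H$ is not an induced subgraph of $P_4$, I construct an $H$-free graph that is not Kempe connected. The first step is a finite reduction. I claim the minimal graphs that are not induced subgraphs of $P_4$ are precisely $K_3$, $\overline{K_3}=3K_1$, $C_4$, $\overline{C_4}=2K_2$, and $C_5$. Indeed, a graph containing none of these five as an induced subgraph is in particular $\{K_3,3K_1\}$-free, hence has at most $5$ vertices (the Ramsey number $R(3,3)=6$), and a direct inspection shows every $\{K_3,3K_1,C_4,2K_2,C_5\}$-free graph on at most $5$ vertices is one of $K_1$, $2K_1$, $K_2$, $K_1+K_2$, $P_3$, $P_4$, that is, an induced subgraph of $P_4$. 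Hence $H$ contains some $M\in\{K_3,3K_1,C_4,2K_2,C_5\}$ as an induced subgraph; since every $M$-free graph is then $H$-free, it suffices to produce, for each such $M$, an $M$-free graph that is not Kempe connected.

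Three of the five cases are settled by a single graph, the \emph{triangular prism} $\Pi$: its vertices are $a_1,a_2,a_3,b_1,b_2,b_3$, the sets $\{a_1,a_2,a_3\}$ and $\{b_1,b_2,b_3\}$ induce triangles, and $a_ib_i$ is an edge for $i\in\{1,2,3\}$. A short check shows $\Pi$ has no independent set of size $3$, no induced $2K_2$, and no induced $C_5$ (any five of its vertices include all three vertices of one triangle), so $\Pi$ is $3K_1$-free, $2K_2$-free and $C_5$-free. But $\Pi$ is not Kempe connected: with $k=\chi(\Pi)=3$, each colour class is an independent set of size $2$, every such set is $\{a_i,b_j\}$ with $i\neq j$, so the partition into colour classes is given by a fixed-point-free permutation of $\{1,2,3\}$ --- and there are exactly two of these. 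One verifies that for \emph{every} $3$-colouring of $\Pi$ and every pair of colours the subgraph induced by those two colour classes is connected, so each Kempe swap merely exchanges two colour classes wholesale and leaves the underlying partition unchanged; the twelve $3$-colourings of $\Pi$ therefore fall into two Kempe classes. (The case $M=2K_2$ can alternatively be deduced from the second main theorem announced in the abstract.)

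It remains to handle $M=K_3$ and $M=C_4$, which call for a triangle-free graph and a $C_4$-free graph that are not Kempe connected. The tidiest route is to produce a single graph $G_0$ of girth at least $6$ that is not Kempe connected, since such a graph is simultaneously $K_3$-, $C_4$- and $C_5$-free. I expect this to be the crux of the argument: the prism, with girth $3$, is a very small non-Kempe-connected graph, whereas a girth-$\ge 6$ example has to be assembled from a larger or non-regular construction, or imported from the literature on Mohar's regular-graph conjecture and related high-girth constructions. Once such a $G_0$ is available the theorem follows: for any $H$ that is not an induced subgraph of $P_4$, according to which of the five minimal obstructions is an induced subgraph of $H$, either $\Pi$ or $G_0$ is $H$-free and not Kempe connected, so not every $H$-free graph is Kempe connected.
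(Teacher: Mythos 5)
Your reduction is sound and matches the paper's in substance: the ``if'' direction is Theorem~\ref{thm:p4-free}, and your identification of $K_3$, $3K_1$, $C_4$, $2K_2$, $C_5$ as the minimal graphs that are not induced subgraphs of $P_4$ (via $R(3,3)=6$ plus inspection) is a clean repackaging of the paper's two-step case analysis (Lemmas~\ref{lem:4-vertex} and~\ref{lem:5-vertex}, which split by whether $H$ has at most four or exactly five vertices and then by whether $H$ contains a triangle or a $3K_1$). Your treatment of the prism is also correct and is exactly how the paper disposes of $3K_1$, $2K_2$ and $C_5$: every $3$-colouring of the prism is Kempe frozen, so the two partitions into colour classes cannot be connected by Kempe swaps.

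The genuine gap is in the cases $M=K_3$ and $M=C_4$. You correctly identify that you need a triangle-free graph and a $C_4$-free graph that are not Kempe connected, but you do not produce either: you only say you ``expect this to be the crux'' and propose importing a girth-$\ge 6$ example from the literature without exhibiting one. These two constructions are precisely the nontrivial content of the paper's proof. The paper does \emph{not} use a single high-girth graph; it gives two separate explicit examples, each certified by the Kempe-frozen criterion (Lemma~\ref{lem:Kempefrozen}): a $14$-vertex triangle-free graph with a Kempe frozen $3$-colouring and a second $3$-colouring inducing a different partition (Figure~\ref{fig:triangle-free}; the triangle-free case can alternatively be sourced from \cite{ReedConj}, whose smallest example has $15$ vertices), and a $15$-vertex $C_4$-free graph in which, for the frozen colouring, every two colour classes induce a Hamiltonian path (Figure~\ref{fig:C4-free}); the latter graph contains triangles, and the former is not $C_4$-free, which is why two graphs are needed. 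Until you supply concrete graphs (or a verified citation) for these two cases, the ``only if'' direction is incomplete for every $H$ containing a triangle or an induced $C_4$ --- which includes most graphs $H$.
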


We also prove:
\begin{theorem}
\label{thm:2K2-free not KC}
For all 
$p\ge 0$, there is a $k$-colourable 2$K_2$-free graph $G$ such that $\mathcal{C}_{k+p}(G)$ does not form a Kempe class.
\end{theorem}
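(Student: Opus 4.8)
The plan is to produce, for each $p$, a $2K_2$-free graph $G$ with $\chi(G)=k$ together with a colouring $c\in\mathcal{C}_{k+p}(G)$ whose Kempe class is a proper subset of $\mathcal{C}_{k+p}(G)$; equivalently, to attach to colourings a quantity that is preserved by every Kempe swap but that takes more than one value on $\mathcal{C}_{k+p}(G)$.

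For $p=0$ I would take $G$ to be the triangular prism $\overline{C_6}$. It is $2K_2$-free because $\overline{G}=C_6$ has no induced $C_4$, and $\chi(G)=3$. A $3$-colouring of $G$ is exactly a partition of $V(C_6)$ into three cliques of $C_6$, i.e. a perfect matching of $C_6$ together with a labelling of the three matched pairs by colours; every colour class then has size $2$, and for any two colours the bichromatic subgraph is the subgraph of $\overline{C_6}$ induced by two matched pairs, which is a $4$-vertex path and hence connected. Consequently every Kempe swap from such a colouring merely transposes two colours, so its Kempe class consists of its $6$ relabellings, all of which induce the \emph{same} perfect matching of $C_6$. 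As $C_6$ has two perfect matchings, $\mathcal{C}_3(\overline{C_6})$ splits into at least two Kempe classes, and the perfect matching is the invariant.

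For $p\ge 1$ this argument cannot be reused directly, and locating why it fails isolates the main obstacle. With even one extra colour available one may recolour a single vertex of the prism to the unused colour (a legitimate Kempe swap, since that colour occurs nowhere), thereby splitting a colour class, and a short chain of such moves reroutes the perfect matching, so already $\mathcal{C}_4(\overline{C_6})$ is a single Kempe class. More fundamentally, one can show that a colouring all of whose bichromatic subgraphs are connected (a \emph{frozen} colouring) forces $\chi(G)$ to equal the number of colours it uses — roughly because such a colouring makes $G$ behave like a complete multipartite graph with the colour classes as parts, and complete multipartite graphs are Kempe connected — so no graph with $\chi(G)=k$ has a frozen $(k+p)$-colouring when $p\ge 1$. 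Hence for $p\ge 1$ the target colouring is necessarily \emph{not} frozen, and the task becomes: find a $2K_2$-free graph $G$ with $\chi(G)=k$ and a non-frozen $(k+p)$-colouring whose Kempe class is confined by an invariant that is robust to the $p$ surplus colours — i.e. in every reachable colouring those $p$ colours must stay trapped on a rigid portion of $G$ from which no Kempe chain can dislodge them, while a matching-type invariant persists elsewhere. A natural candidate is to let $G$ be the complement of a disjoint union of several triangle-free, $C_4$-free blocks, each with at least two perfect matchings (so $\overline{G}$ is $2K_2$-free and each block already carries a matching invariant), and then to add edges \emph{within} these blocks so as to lower the clique-cover number of $\overline{G}$ — that is, $\chi(G)$ — by exactly $p$ while retaining the per-block matching information; the invariant would be the tuple of per-block matching types. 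The crux — and the step I expect to be the main obstacle — is precisely this pinning: $2K_2$-freeness makes $G$ very ``join-like'', so colours are prone to being globally interchangeable, and one must carry out a careful analysis of which connected bichromatic subgraphs can occur to certify that no sequence of Kempe swaps can shuttle the $p$ spare colours from one block to another and thereby reroute a matching.
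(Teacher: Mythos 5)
Your $p=0$ argument (the triangular prism, with the perfect matching of $C_6$ as the invariant) is correct and is exactly what the paper does. The proposal fails at $p\ge 1$, and it fails because of a false lemma that you then build the rest of the plan around. You assert that a colouring in which every bichromatic subgraph is connected forces the number of colours to equal $\chi(G)$, ``roughly because such a colouring makes $G$ behave like a complete multipartite graph with the colour classes as parts.'' That inference is wrong: connectedness of the subgraph induced by two colour classes is far weaker than completeness between them. Already a $P_4$ $a\,b\,c\,d$ coloured $1,2,1,2$ has a connected bichromatic subgraph with a non-edge between the classes. Exploiting exactly this slack, the paper exhibits $2K_2$-free graphs $D_q$ (on $4q+2$ vertices, $\chi=(3q+2)/2$ for even $q$) and $Y_r$ (on $6r$ vertices, $\chi=2r$) admitting ``Kempe frozen'' $(2q+1)$- and $3r$-colourings respectively, i.e.\ colourings using strictly more than $\chi$ colours in which every pair of colour classes induces a connected subgraph. (In the complement these are clique partitions into pairs where any two pairs are joined by at most one edge, so each pair of classes induces a $2K_2$ or a $P_4$, both with connected complements.) Such a colouring is locked under Kempe swaps --- every swap merely transposes two classes --- and is therefore not Kempe equivalent to any $\chi$-colouring, which settles $p\ge 1$ by the same invariant (the unordered partition into colour classes) that you used for $p=0$.

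In short, the route you explicitly ruled out is precisely the one that works, and the alternative you propose in its place (complements of unions of triangle-free, $C_4$-free blocks, with a per-block matching invariant and a ``pinning'' argument to trap the $p$ spare colours) is left unexecuted: you yourself flag the pinning step as the unresolved crux, and no construction with the required chromatic number is specified. As written, the proposal proves only the $p=0$ case.
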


We have previously studied what can be considered a specialization of Kempe swaps: In reconfiguration of graph colourings, we seek to transform one $k$-colouring into another by changing the colour of a single vertex at a time, always maintaining a $k$-colouring (thus always changing the colour of a vertex to a colour which does not appear in its neighbourhood). Note that such a single vertex recolouring corresponds to a Kempe swap on a Kempe chain consisting of just one vertex; this is sometimes called a {\em trivial Kempe swap}. 
For more on reconfiguration, see the surveys by van den Heuvel \cite{Heuvel2013} and Nishimura \cite{Nishimura}.

A graph $G$ is $k${\em -mixing} if any $k$-colouring can be transformed into any other by single-vertex recolouring steps described above. In a $k$-colouring of $K_k$, it is not possible to change the colour of any vertex, so it is common to study reconfiguration of colourings with at least one colour more than the chromatic number. We call a graph $G$ {\em recolourable} if it is $k$-mixing for every $k > \chi(G)$.

In \cite{BCM}, with Owen Merkel, we proved:

\begin{theorem}[\cite{BCM}]
\label{thm:BCM}
Every $H$-free graph $G$ is recolourable if and only if $H$ is an induced subgraph of $P_4$ or of $P_3+P_1$.
\end{theorem}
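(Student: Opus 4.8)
The plan is to exhibit, for each $p$, two $(\chi+p)$-colourings of a suitable $2K_2$-free graph that lie in different Kempe classes, and the natural certificate is a \emph{frozen} colouring. Call a colouring $c$ that uses all $m$ of its colours \emph{frozen} if for every pair of colours the subgraph induced by the two corresponding colour classes is connected. The first step is a reduction lemma: if $c$ is a frozen $m$-colouring, then for each pair of colours the whole bichromatic subgraph is the unique Kempe chain, so every Kempe swap merely transposes two colour names globally. Since transpositions generate the symmetric group and a renaming preserves frozenness, the Kempe class of $c$ is exactly the set of colourings obtained from $c$ by permuting colour names; in particular every colouring Kempe-equivalent to $c$ is surjective onto all $m$ colours.

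Given the lemma, the theorem reduces to a construction problem, and I would fix $k=3$ throughout. For $p=0$ I take $G$ to be the triangular prism $\overline{C_6}$, which is $2K_2$-free with $\chi=3$: its two ``chiralities'' (colouring $a_i=i,\ b_i=i+1$ and colouring $a_i=i,\ b_i=i-1$, indices mod $3$) are both frozen $3$-colourings, and the lemma shows their Kempe classes are their six colour-renamings, which are disjoint; hence $\mathcal{C}_3(G)$ splits into at least two Kempe classes. For $p\ge 1$ the goal becomes: construct a $2K_2$-free graph $G$ with $\chi(G)=3$ that admits a frozen $(3+p)$-colouring, i.e.\ a partition of $V(G)$ into $3+p$ pairwise-bichromatically-connected independent sets while the chromatic number stays at $3$. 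With such a $G$ the lemma finishes the proof: the frozen colouring's class consists only of surjective $(3+p)$-colourings, whereas any proper $3$-colouring uses at most $3<3+p$ colours and therefore lies in a different Kempe class, so $\mathcal{C}_{3+p}(G)=\mathcal{C}_{k+p}(G)$ is not a single Kempe class.

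The hard part is this last construction --- producing frozen colourings whose number of colours exceeds the chromatic number by a prescribed amount \emph{without} creating an induced $2K_2$. The clean algebraic candidate is the tensor product $K_{3+p}\times K_3$ (adjacency iff both coordinates differ): its first-coordinate partition gives $3+p$ independent classes, each pair of classes induces a $6$-cycle and is therefore connected (so the colouring is frozen), and $\chi=3$ because the second coordinate is a proper $3$-colouring while $(1,1),(2,2),(3,3)$ is a triangle. Unfortunately this graph is \emph{not} $2K_2$-free: the four vertices $(i,a),(i,b),(j,a),(j,b)$ induce a $2K_2$, which is exactly the obstruction that every crown graph (a $K_{m,m}$ minus a perfect matching) contains. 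I would therefore replace the between-class bipartite graphs by \emph{connected $2K_2$-free} bipartite graphs (chain graphs, whose nested neighbourhoods forbid an induced $2K_2$ inside a single pair), distributing the unavoidable dominating vertices across distinct vertices of each class so that they do not assemble into a clique and push $\chi$ back up to $3+p$.

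I expect the main obstacle to be exactly this simultaneous balancing act: keeping every pair of classes connected (for frozenness), keeping every between-class graph $2K_2$-free, ruling out induced $2K_2$'s that straddle two \emph{different} pairs (global $2K_2$-freeness), and certifying that the chromatic number remains $3$ rather than climbing to the number of colour classes. Once a family $\{G_p\}$ meeting these constraints is in hand, the frozen lemma together with the low-colour witness yields Theorem~\ref{thm:2K2-free not KC} uniformly in $p$.
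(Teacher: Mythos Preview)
Your proposal does not address the stated theorem. The statement you were asked to prove is Theorem~\ref{thm:BCM}, a result about \emph{recolourability} (single-vertex recolouring) cited from~\cite{BCM}; the paper does not prove it at all, it merely quotes it as background for the analogous Kempe result. What you have written is instead a sketch toward Theorem~\ref{thm:2K2-free not KC}, which is a different statement about Kempe classes in $2K_2$-free graphs.

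Even viewed as an attempt at Theorem~\ref{thm:2K2-free not KC}, your sketch has a genuine gap. You correctly identify the frozen-colouring lemma and handle $p=0$ with the prism, but for $p\ge 1$ you fix $\chi=3$ and then only describe the obstacles to building a $2K_2$-free graph with a frozen $(3+p)$-colouring, without actually producing one; you explicitly say the tensor-product candidate fails and that the chain-graph replacement is what you ``would'' try, with no verification that the simultaneous constraints (global $2K_2$-freeness, bichromatic connectedness of every pair, and $\chi$ staying low) can be met. The paper avoids this difficulty by abandoning the constraint $\chi=3$: it uses two explicit families $D_q$ and $Y_r$ from~\cite{frozen} whose chromatic numbers grow with the parameter, and for which the gap between $\chi$ and the number of colours in the Kempe-frozen colouring is $\lfloor q/2\rfloor$ and $r$ respectively. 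Thus $p=0$ is the prism, $p=1$ is $D_2$ (with $\chi=4$ and a Kempe-frozen $5$-colouring), and $p\ge 2$ is $Y_p$ (with $\chi=2p$ and a Kempe-frozen $3p$-colouring). The Kempe-frozenness of the explicit colourings is checked directly in the complement, where each colour class is a $K_2$ and one only needs that any two such edges span a $2K_2$ or a $P_4$.
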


Our main result, Theorem \ref{thm:p4-free only}, on Kempe connectedness is a parallel result to Theorem \ref{thm:BCM} on reconfiguration of colourings.


\section{Proof of Theorem \ref{thm:p4-free only}}

A $k$-colouring of a graph $G$ is called {\em frozen} if for every vertex $v \in V(G)$, $v$ is adjacent to a vertex of every colour different from its colour; in other words, a $k$-colouring is frozen if for each vertex $v$ and for each of the $k$ colours $c$, the closed neighbourhood $N[v]$ of $v$ contains a vertex of colour $c$. 
A $k$-colouring of a graph $G$ is called {\em Kempe frozen}
if each of the $k$ colours appears on some vertex and for any two colours, the subgraph induced by the union of the corresponding colour classes is connected. 

\begin{observation}
If a $k$-colouring of a graph is Kempe frozen, then it is frozen.
\end{observation}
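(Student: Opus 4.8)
The plan is to argue by contradiction, exploiting the hypothesis that under a Kempe frozen colouring every colour is actually used. So suppose some $k$-colouring $c$ of a graph $G$ is Kempe frozen but not frozen. Unpacking ``not frozen'', there is a vertex $v$, with $c(v)=a$ say, and a colour $b\neq a$ such that no neighbour of $v$ has colour $b$; equivalently $N[v]$ contains no vertex of colour $b$.

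The key step is to examine the subgraph $K$ induced by the union of colour classes $a$ and $b$. Since $c$ is Kempe frozen, each of the $k$ colours appears, so colour $b$ occurs on some vertex $w$; as $c(w)=b\neq a=c(v)$, we have $w\neq v$, so $K$ has at least two vertices (namely $v$ and $w$). Since $c$ is Kempe frozen, $K$ is connected, and a connected graph on at least two vertices has no isolated vertex, so $v$ has a neighbour $u$ in $K$. By construction $c(u)\in\{a,b\}$, and since $u$ is adjacent to $v$ we cannot have $c(u)=a=c(v)$, hence $c(u)=b$ — contradicting the choice of $v$ and $b$. (The degenerate cases $k\le 1$ are trivial, since then ``frozen'' is a vacuous condition.)

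I do not anticipate a real obstacle here: the argument is short and essentially a one-line observation once the right two-colour subgraph is chosen. The only point that needs care is ensuring that subgraph is non-trivial, which is precisely why the definition of Kempe frozen insists that all $k$ colours appear — without that clause the implication would fail (e.g.\ an isolated vertex coloured $1$ together with a $K_2$ coloured $\{2,3\}$ would vacuously satisfy the connectivity condition on pairs of used colours but not be frozen with respect to colour $1$).
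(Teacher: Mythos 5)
Your proof is correct, and it is exactly the argument the paper intends: the paper states this as an Observation with no written proof, and the natural justification is precisely your two-colour-subgraph connectivity argument (including the correct remark that the ``every colour appears'' clause is what makes the induced subgraph nontrivial).
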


\begin{lemma}\label{lem:Kempefrozen}
Let $G$ be a graph which has at least two $k$-colourings which give different partitions of the vertices. If $G$ has a $k$-colouring which is  Kempe frozen, then the $k$-colourings of $G$ do not form a Kempe class.
\end{lemma}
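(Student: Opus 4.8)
The plan is to show that the Kempe class containing a Kempe frozen colouring consists entirely of colourings that induce the same partition of $V(G)$, so that any $k$-colouring with a different partition cannot lie in it.

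First I would isolate the crucial feature of a Kempe frozen $k$-colouring $\alpha$: for every pair of colours $i$ and $j$, the subgraph induced by the union of colour classes $i$ and $j$ is connected, hence this subgraph is itself the unique Kempe chain for the pair $(i,j)$. Consequently, the only Kempe swap available on the pair $(i,j)$ interchanges the colours $i$ and $j$ on all vertices carrying one of these two colours; this leaves the partition $\{V_1,\dots,V_k\}$ of $V(G)$ into colour classes unchanged, merely relabelling two of the parts. Since all $k$ colours appear in $\alpha$ and the two swapped classes are nonempty, the resulting colouring again uses all $k$ colours; and since being Kempe frozen is a property of the partition into colour classes rather than of the labelling, the new colouring is again Kempe frozen.

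Next I would iterate this observation: by induction on the number of Kempe swaps, every $k$-colouring of $G$ that is Kempe equivalent to $\alpha$ is Kempe frozen and induces the same partition of $V(G)$ as $\alpha$. Finally, by hypothesis $G$ has two $k$-colourings that induce different partitions, so at least one of them, say $\beta$, induces a partition different from that of $\alpha$. Then $\beta$ is not Kempe equivalent to $\alpha$, and therefore $\mathcal{C}_{k}(G)$ is not a single Kempe class, as claimed.

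I do not expect a serious obstacle here; the only point requiring care is the step asserting that a Kempe swap on a Kempe frozen colouring cannot split off a proper Kempe chain --- this is precisely where the connectedness of each two-colour subgraph is used --- together with the routine bookkeeping that the swapped colouring still uses all $k$ colours and remains Kempe frozen, so that the induction goes through.
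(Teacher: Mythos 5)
Your proposal is correct and follows essentially the same route as the paper: the paper's proof likewise observes that from a Kempe frozen colouring every Kempe swap merely interchanges two entire colour classes, so the partition can never change. Your version just makes explicit the induction (and the fact that the property is preserved under such swaps) that the paper leaves implicit.
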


\begin{proof}
Assume the hypotheses. Let $\gamma$ be a $k$-colouring which is Kempe frozen. Then, starting with $\gamma$, any Kempe swap only interchanges the colours in two of the colour classes, so we cannot obtain a colouring whose partition into colour classes is different from that of the starting colouring.
\end{proof}

\begin{lemma}\label{lem:triangle-free}
    There exists a triangle-free graph which is not Kempe connected.
\end{lemma}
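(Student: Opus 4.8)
The plan is to exhibit an explicit triangle-free graph together with a Kempe frozen colouring of it that also admits a second colouring inducing a different vertex partition; Lemma~\ref{lem:Kempefrozen} then immediately gives the conclusion. The natural candidate is a bipartite-looking but in fact non-bipartite triangle-free graph, and the cycle $C_5$ is the first place to look: it is triangle-free, and in a $3$-colouring of $C_5$ one colour class is a single vertex while the other two are edges. However, a single $3$-colouring of $C_5$ is \emph{not} Kempe frozen — the two colour classes of size one and two on a path of length $3$ are disconnected — so $C_5$ alone will not work. Instead I would take a graph built to force frozenness: a good choice is the \emph{complement of $C_7$}, or more robustly a suitable blow-up of $C_5$, but the cleanest route is to use the Cayley-type construction $C_5[\,\overline{K_m}\,]$ or, even simpler, to hunt for a small triangle-free graph with a $k$-colouring in which every pair of colour classes induces a connected (in fact path- or cycle-like) subgraph.

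Concretely, I would try the following. Consider the graph $G$ obtained from $C_5$ with vertices $v_1,\dots,v_5$ by blowing up each $v_i$ into an independent set, or alternatively consider the \emph{Grötzsch graph} or the \emph{Chvátal graph} — both are triangle-free with chromatic number $4$ and are highly symmetric, and a $4$-colouring of such a graph has a good chance of being Kempe frozen. The key steps are: (1) verify the chosen graph $G$ is triangle-free; (2) produce an explicit $k$-colouring $\gamma$ and check, for each of the $\binom{k}{2}$ pairs of colours, that the union of the two colour classes induces a connected subgraph — this makes $\gamma$ Kempe frozen; (3) exhibit a second $k$-colouring of $G$ whose partition into colour classes differs from that of $\gamma$, so that the hypothesis of Lemma~\ref{lem:Kempefrozen} about two colourings giving different partitions is met; (4) invoke Lemma~\ref{lem:Kempefrozen} to conclude the $k$-colourings of $G$ do not form a Kempe class, hence $G$ is not Kempe connected.

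For step (3), note that if $G$ has any nontrivial automorphism $\sigma$ that does not fix the partition induced by $\gamma$ setwise, then $\gamma\circ\sigma$ is a second colouring with a different partition; vertex-transitive triangle-free graphs like $C_5$-blow-ups or the Chvátal/Grötzsch graph supply such automorphisms in abundance. Alternatively, one can simply permute colours among vertices in one colour class of size $\ge 2$ by hand to break the partition, provided the resulting map is still a proper colouring — for a frozen colouring this needs a little care, so the automorphism argument is cleaner.

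I expect the main obstacle to be step (2): checking that \emph{every} pair of colour classes induces a connected subgraph in a concrete small graph. This is a finite case check, but it is the crux, and it is what dictates the choice of graph — one wants a graph dense enough (subject to being triangle-free) that colour classes are small and each pair spans almost all vertices, making connectivity easy to see, yet structured enough that one can present the verification compactly. If a single small example proves awkward to verify by hand, the fallback is to use the balanced complete bipartite-like construction $C_5[\overline{K_m}]$ for suitable $m$, where the cyclic symmetry reduces the connectivity check to a single orbit of pairs.
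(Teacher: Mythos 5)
Your overall framework is exactly the paper's: exhibit a triangle-free graph with a Kempe frozen $k$-colouring and a second $k$-colouring inducing a different partition, then invoke Lemma~\ref{lem:Kempefrozen}. However, the proposal never actually delivers the one thing the lemma needs, namely a concrete verified example, and you yourself flag the decisive step (checking that every pair of colour classes induces a connected subgraph) as an obstacle you have not carried out. As written, this is a plan for a proof rather than a proof: an existence statement proved by example is only proved once the example is exhibited and checked.

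Moreover, several of the candidates you float do not survive scrutiny. The complement of $C_7$ is not triangle-free (any independent set of size $3$ in $C_7$, e.g.\ three pairwise non-consecutive vertices, becomes a triangle in the complement). The blow-up $C_5[\overline{K_m}]$ with its natural $3$-colouring $\{V_1\cup V_3,\ V_2\cup V_4,\ V_5\}$ is frozen but \emph{not} Kempe frozen: in the subgraph induced by $V_1\cup V_3\cup V_5$, the set $V_3$ is a collection of isolated vertices, so two of the colour classes fail the connectivity requirement. Whether the Gr\"otzsch or Chv\'atal graphs admit a Kempe frozen $4$-colouring is not obvious and is not argued; the fact that the known triangle-free examples in the literature cited by the paper have $14$--$15$ vertices (the paper's own example, in Figure~\ref{fig:triangle-free}, has $14$) strongly suggests the small symmetric candidates do not work. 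To close the gap you would need to fix one specific triangle-free graph, write down an explicit colouring, and perform the finite connectivity check for each pair of colour classes, together with exhibiting a second colouring with a genuinely different partition --- which is precisely what the paper does with its $14$-vertex graph.
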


\begin{proof}
In \cite{ReedConj}, a class of triangle-free graphs each of which is Kempe frozen is given. The smallest graph in that class has 15 vertices. We give a graph on 14 vertices: 
    see Figure \ref{fig:triangle-free} for a triangle-free graph $G$ with $\chi(G) = 3$ such that $\mathcal{C}_3(G)$ does not form a Kempe class.

In the colouring on the left, for any two of the colours, the subgraph induced by the union of the two colour classes is connected; i.e., the colouring is Kempe frozen. The two colourings give different partitions of the vertices into colour classes. Thus by Lemma \ref{lem:Kempefrozen}, the set of all 3-colourings does not form a Kempe class. 
\end{proof}
 
A {\em Hamiltonian path} in a graph $G$ is a path which contains all the vertices of $G$ and a  \emph{Hamiltonian cycle} in $G$ is a cycle which contains all the vertices of $G$.

\begin{lemma}\label{lem:c4-free}
There exists a $C_4$-free graph which is not Kempe connected.
\end{lemma}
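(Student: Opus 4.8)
The strategy mirrors the proof of Lemma \ref{lem:triangle-free}: I want to exhibit a concrete $C_4$-free graph $G$ together with a $\chi(G)$-colouring that is Kempe frozen and a second $\chi(G)$-colouring giving a different partition of the vertices; then Lemma \ref{lem:Kempefrozen} immediately gives the conclusion. So the real content is finding the graph. Since $C_4$-freeness forbids a $4$-cycle as an induced subgraph, girth-$5$ graphs (or more generally graphs whose only short cycles are triangles) are natural candidates, and I expect small vertex-transitive or near-vertex-transitive graphs to be the most economical source of Kempe-frozen colourings.

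The cleanest candidate to try first is a small graph with chromatic number $3$ admitting a proper $3$-colouring in which, for each of the three pairs of colour classes, the bipartite-ish subgraph they induce is connected (equivalently: every vertex sees both other colours, and moreover the two-colour subgraphs don't split). I would look at things like the $5$-cycle $C_5$ (too small — only one colour class can be a single vertex and it is not Kempe frozen for $k=3$), the Petersen graph, the Grötzsch graph (but that contains $C_4$), the Kneser/triangular-type graphs, and wheels-minus-edges. A particularly promising line: take a graph built so that each colour class has the same size and the union of any two classes is a single cycle through all those vertices — for instance a suitable circulant graph on $9$ or $12$ vertices whose connection set avoids creating a $4$-cycle while making each pair of the three residue classes mod $3$ induce a Hamiltonian cycle on those $2n/3$ vertices. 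Concretely, I would check the circulant $C_9(1,2)$ or a similar circulant: colour vertex $i$ by $i \bmod 3$, verify $C_4$-freeness of the connection set, and verify that each two-colour subgraph is connected. Alternatively, one can try to delete a small number of edges from a Kempe-frozen triangle-containing graph (such as the $14$-vertex example of Lemma \ref{lem:triangle-free}, or a standard Kempe-frozen graph from \cite{ReedConj}) so as to destroy all induced $C_4$'s while preserving both the chromatic number and the "every pair of colour classes induces a connected subgraph" property; this is delicate because removing edges can both create new colourings and disconnect a two-colour subgraph.

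After fixing $G$ and the two colourings, the verification breaks into three routine checks, which I would present via a labelled figure (as the authors do in Figure \ref{fig:triangle-free}): (i) $G$ is $C_4$-free — check every set of four vertices inducing a cycle, or argue structurally from the construction (e.g.\ girth $\geq 5$, or a circulant connection-set argument that no two vertices have two common neighbours unless they are themselves adjacent via a triangle); (ii) in the displayed colouring $\gamma$, for each of the $\binom{k}{2}$ pairs of colours the union of the two colour classes induces a connected subgraph, so $\gamma$ is Kempe frozen; (iii) the second colouring partitions $V(G)$ differently from $\gamma$ — typically because $G$ has an automorphism, or simply an explicit alternative colouring, that permutes vertices between colour classes in a way no recolouring of names can match. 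Then $G$ satisfies the hypotheses of Lemma \ref{lem:Kempefrozen}, so $\mathcal{C}_{\chi(G)}(G)$ — and hence the $k$-colourings of $G$ for some $k \geq \chi(G)$ — do not form a Kempe class, proving $G$ is not Kempe connected.

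The main obstacle I anticipate is (ii) combined with $C_4$-freeness: Kempe-frozen colourings tend to be "rigid" and such rigidity is easiest to arrange in dense-ish graphs, but $C_4$-freeness severely caps density (by Kővári–Sós–Turán / the friendship-type constraint), so the two-colour subgraphs are sparse and easily disconnected. Finding the right sparse graph where nonetheless no two-colour subgraph falls apart is the crux; I expect the successful example to be a carefully chosen small circulant or a known combinatorial graph (generalized quadrangle incidence graph, Kneser graph, or a Cayley graph) rather than an ad hoc construction, and most of the write-up will consist of exhibiting it and pointing to the figure for the verifications.
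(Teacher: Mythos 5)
Your plan correctly identifies the proof template the paper uses: exhibit a $C_4$-free graph $G$ with a Kempe frozen $\chi(G)$-colouring together with a second $\chi(G)$-colouring inducing a different partition, and invoke Lemma \ref{lem:Kempefrozen}. But for an existence lemma the explicit witness \emph{is} the proof, and you never produce one; everything after ``the real content is finding the graph'' is a list of candidates to try, not a verification that any of them works. That is a genuine gap, not a presentational one.

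Moreover, your most concrete candidate fails for a reason worth understanding. For $C_9(1,2)$ (and likewise $C_{12}(1,2)$) with colour classes the residues mod $3$, each independent set of maximum size is forced to be a full residue class (the gaps around the cycle must all equal $3$), so the partition into three colour classes is \emph{unique}. Then the hypothesis of Lemma \ref{lem:Kempefrozen} that $G$ has two $k$-colourings with different partitions is violated; worse, since each pair of classes induces a connected subgraph, a single Kempe swap realizes the transposition of those two colours, and transpositions generate all colour permutations, so $\mathcal{C}_3$ of that circulant \emph{is} a single Kempe class. Your design principle ``union of any two classes is a Hamiltonian cycle on a circulant'' is exactly what makes the partition rigid and kills check (iii). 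The paper's example is a $14$-vertex graph (Figure \ref{fig:C4-free}) with three classes of size roughly $5$ in which each pair of classes induces a Hamiltonian \emph{path}, and which demonstrably admits a second, genuinely different partition (shown as the right-hand colouring); both that second colouring and the $C_4$-freeness argument (no two vertices of distinct colours share two neighbours in a third class, and two-colour subgraphs are paths) are part of the content you would still need to supply.
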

\begin{proof}
    See Figure \ref{fig:C4-free} for a $C_4$-free graph $G$ with $\chi(G) = 3$ such that $\mathcal{C}_3(G)$ does not form a Kempe class.

In the colouring on the left, for any two of the colours, the subgraph induced by the union of the two colour classes is a Hamiltonian path and thus connected; i.e., the colouring is Kempe frozen. The two colourings give different partitions of the vertices into colour classes. Thus by Lemma \ref{lem:Kempefrozen}, the set of 3-colourings does  not form a Kempe class. 

To see that the graph does not contain a $C_4$, first note that for the colouring on the left, any two colour classes induce a Hamiltonian path, so there can't be a $C_4$ that uses vertices of only two colours. So any $C_4$ would have two vertices, say $a$ and $b$, of one colour and one of each of the other two colours, say vertices $c$ and $d$; this would mean that $c$ and $d$ have the same neighbours in $a$'s colour class, but that never happens. 
\end{proof}

\begin{corollary} \label{lem:C4free}
For all $k \ge 3$ there exists a $k$-colourable $C_4$-free graph $H_k$ such that $\mathcal{C}_k(H_k)$ does not form a Kempe class.
\end{corollary}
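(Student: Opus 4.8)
The plan is to induct on $k$, using the $C_4$-free graph $G$ of Lemma~\ref{lem:c4-free} as the base case $H_3:=G$, and building $H_{k+1}$ from $H_k$ by adding a single \emph{universal} vertex $v$ (adjacent to every vertex of $H_k$), i.e.\ $H_{k+1}:=H_k+v$. Throughout the induction I would carry the hypothesis that $H_k$ is $C_4$-free, is $k$-colourable, has a Kempe frozen $k$-colouring, and admits at least two $k$-colourings inducing different partitions of $V(H_k)$ into colour classes. The base case is exactly what the proof of Lemma~\ref{lem:c4-free} supplies: $G$ is $C_4$-free, its left-hand colouring is Kempe frozen, and the two displayed colourings give different partitions. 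Once the inductive step is established, applying Lemma~\ref{lem:Kempefrozen} to $H_k$ shows $\mathcal{C}_k(H_k)$ is not a Kempe class, giving the corollary for every $k\ge 3$.

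For the inductive step I would verify three things about $H_{k+1}=H_k+v$. \textbf{(i) $C_4$-freeness.} An induced $C_4$ either lies inside $H_k$, impossible by induction, or contains $v$; but every vertex of an induced $C_4$ is non-adjacent to exactly one of the other three, whereas $v$ is adjacent to all three, so no induced $C_4$ can contain $v$. \textbf{(ii) A Kempe frozen $(k+1)$-colouring.} Take a Kempe frozen $k$-colouring $\gamma$ of $H_k$ with colours $1,\dots,k$ and extend it to $\gamma'$ on $H_{k+1}$ by setting $\gamma'(v)=k+1$. For colours $i,j\le k$ the union of colour classes $i$ and $j$ is unchanged from $H_k$, hence connected; for a pair $\{i,k+1\}$ with $i\le k$, colour class $k+1$ is the singleton $\{v\}$ and $v$ is adjacent to every vertex of colour $i$, so that induced subgraph is connected. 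Thus $\gamma'$ is Kempe frozen, and in particular $H_{k+1}$ is $(k+1)$-colourable. \textbf{(iii) Two distinct colour partitions.} Since $v$ is universal it forms its own colour class in every $(k+1)$-colouring of $H_{k+1}$, so the $(k+1)$-colourings of $H_{k+1}$ are precisely the $k$-colourings of $H_k$ together with the singleton $\{v\}$; hence two $k$-colourings of $H_k$ with different partitions lift to two $(k+1)$-colourings of $H_{k+1}$ with different partitions. This re-establishes the hypothesis for $H_{k+1}$.

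I do not expect a substantive obstacle: the argument is a routine induction, and the only steps requiring any care are the three verifications above — chiefly the observation that a universal vertex cannot lie on an induced $C_4$, and the check that the join with a single vertex preserves both the Kempe frozen property and the existence of two distinct colour partitions.
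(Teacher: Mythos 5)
Your proposal is correct and is essentially the paper's own argument: the paper forms the join of $G$ with $K_{k-3}$, which is exactly your construction of repeatedly adding a universal vertex, and your three verifications (preservation of $C_4$-freeness, of a Kempe frozen colouring, and of two distinct colour partitions) are the details the paper leaves implicit.
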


\begin{proof}
Let $G$ be the graph of Figure \ref{fig:C4-free}. By Lemma \ref{lem:C4free}, $\chi(G) = 3$ and $\mathcal{C}_3(G)$ does not form a Kempe class. For $k \ge 4$, by taking the join of $G$ and $K_{k-3}$, we obtain a $k$-colourable graph $H_k$ for which $\mathcal{C}_k(H_k)$ does not form a Kempe class. 
\end{proof}

\begin{figure}[h] 
    \centering
    \begin{tikzpicture}[scale=0.6]
    \tikzstyle{vertex}=[circle, draw, fill=black, inner sep=0pt, minimum size=5pt]  
    
    \node[vertex, label= left: 2] (0) at (-4.800000000000001, 4.857142857142858) {};
    \node[vertex, label= right: 1] (1) at (1.8999998910086493, 4.842857142857143) {};
    \node[vertex, label= left: 1] (2) at (-4.857142857142858, -2.914285932268416) {};
    \node[vertex, label= right:3] (3) at (1.999999891008649, -2.985714503696987) {};
    \node[vertex, label= above: 3] (4) at (-3.6428572518484934, 3.9000000000000004) {};
    \node[vertex, label= left: 2] (5) at (-2.6571429661342076, 3.0428571428571427) {};
    \node[vertex, label= above: 3] (6) at (-1.514285823277064, 2.0714287894112715) {};
    \node[vertex, label= above: 1] (7) at (-0.342857251848494, 3.0142857142857142) {};
    \node[vertex, label= above: 3] (8) at (0.6714284624372215, 3.8142857142857145) {};
    \node[vertex, label= above: 2] (9) at (-3.7142858232770646, -2.028571646554129) {};
    \node[vertex, label= above: 1] (10) at (-2.614285823277065, -1.1428569248744422) {};
    \node[vertex, label= right: 2] (11) at (-1.4571429661342081, -0.2571426391601559) {};
    \node[vertex, label= right: 3] (12) at (-0.4857143947056368, -1.0142854963030137) {};
    \node[vertex, label= right: 2] (13) at (0.7285713195800785, -1.985714503696987) {};

    \draw (2)--(0); \draw (0)--(1); \draw (1)--(3); \draw (3)--(2); \draw (0)--(4); \draw (4)--(5); \draw (5)--(6); \draw (6)--(7); \draw (7)--(8); \draw (8)--(1); \draw (2)--(9); \draw (9)--(10); \draw (10)--(11); \draw (11)--(12); \draw (12)--(13); \draw (13)--(3); \draw (6)--(11); \draw (0)--(10); \draw (5)--(2); \draw (12)--(9); \draw (10)--(13); \draw (7)--(4);
    \draw (5) .. controls +(up:1cm) and +(up:0.5cm) .. (8); \draw (10)--(8); \draw (1)--(12); \draw (7)--(13); 
    \end{tikzpicture}
    \hspace{20mm}
    \begin{tikzpicture}[scale=0.6]
    \tikzstyle{vertex}=[circle, draw, fill=black, inner sep=0pt, minimum size=5pt]  
    
    \node[vertex, label= left: 2] (0) at (-4.800000000000001, 4.857142857142858) {};
    \node[vertex, label= right: 1] (1) at (1.8999998910086493, 4.842857142857143) {};
    \node[vertex, label= left: 1] (2) at (-4.857142857142858, -2.914285932268416) {};
    \node[vertex, label= right:3] (3) at (1.999999891008649, -2.985714503696987) {};
    \node[vertex, label= above: 1] (4) at (-3.6428572518484934, 3.9000000000000004) {};
    \node[vertex, label= left: 3] (5) at (-2.6571429661342076, 3.0428571428571427) {};
    \node[vertex, label= above: 1] (6) at (-1.514285823277064, 2.0714287894112715) {};
    \node[vertex, label= above: 3] (7) at (-0.342857251848494, 3.0142857142857142) {};
    \node[vertex, label= above: 2] (8) at (0.6714284624372215, 3.8142857142857145) {};
    \node[vertex, label= above: 2] (9) at (-3.7142858232770646, -2.028571646554129) {};
    \node[vertex, label= above: 1] (10) at (-2.614285823277065, -1.1428569248744422) {};
    \node[vertex, label= right: 2] (11) at (-1.4571429661342081, -0.2571426391601559) {};
    \node[vertex, label= right: 3] (12) at (-0.4857143947056368, -1.0142854963030137) {};
    \node[vertex, label= right: 2] (13) at (0.7285713195800785, -1.985714503696987) {};

    \draw (2)--(0); \draw (0)--(1); \draw (1)--(3); \draw (3)--(2); \draw (0)--(4); \draw (4)--(5); \draw (5)--(6); \draw (6)--(7); \draw (7)--(8); \draw (8)--(1); \draw (2)--(9); \draw (9)--(10); \draw (10)--(11); \draw (11)--(12); \draw (12)--(13); \draw (13)--(3); \draw (6)--(11); \draw (0)--(10); \draw (5)--(2); \draw (12)--(9); \draw (10)--(13); \draw (7)--(4);
    \draw (5) .. controls +(up:1cm) and +(up:0.5cm) .. (8); \draw (10)--(8); \draw (1)--(12); \draw (7)--(13); 
    \end{tikzpicture}    
    
    \caption{Two 3-colourings of a triangle-free graph which are not Kempe equivalent}
    \label{fig:triangle-free}
\end{figure}
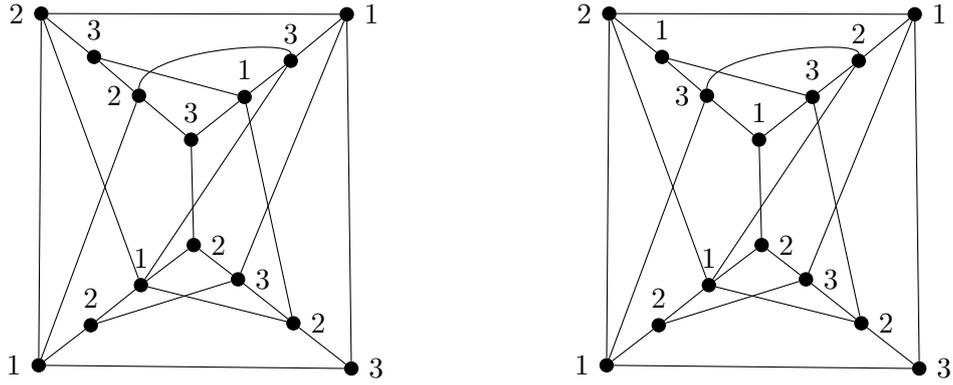

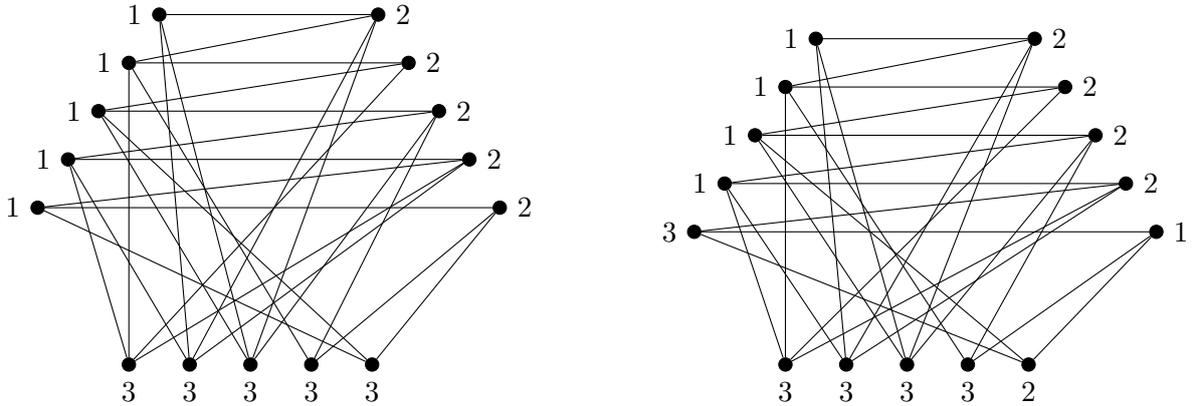
\begin{figure} 
    \centering
    \begin{tikzpicture}[scale=0.8]
    \tikzstyle{vertex}=[circle, draw, fill=black, inner sep=0pt, minimum size=5pt]  
    \node[vertex, label= left: 1] (0) at (-4.2, 1.6) {};
    \node[vertex, label= left: 1] (1) at (-3.7, 2.4) {};
    \node[vertex, label= left: 1] (2) at (-3.2, 3.2) {};
    \node[vertex, label= left: 1] (3) at (-2.7, 4.0) {};
    \node[vertex, label= left: 1] (4) at (-2.2, 4.8) {};
    \node[vertex, label= right: 2] (5) at (1.4, 4.8) {};
    \node[vertex, label= right: 2] (6) at (1.9, 4.0) {};
    \node[vertex, label= right: 2] (7) at (2.4, 3.2) {};
    \node[vertex, label= right: 2] (8) at (2.9, 2.4) {};
    \node[vertex, label= right: 2] (9) at (3.4, 1.6) {};
    \node[vertex, label= below: 3] (10) at (-2.7, -1) {};
    \node[vertex, label= below: 3] (11) at (-1.7, -1) {};
    \node[vertex, label= below: 3] (12) at (-0.7, -1) {};
    \node[vertex, label= below: 3] (13) at (0.3, -1) {};
    \node[vertex, label= below: 3] (14) at (1.3, -1) {};

    \draw (4)--(5); \draw (5)--(3); \draw (3)--(6);\draw (6)--(2); \draw (2)--(7); \draw (7)--(1); \draw (1)--(8);\draw (8)--(0); \draw (0)--(9); \draw (9)--(14); \draw (13)--(9); \draw (13)--(7); \draw (7)--(12); \draw (12)--(5); \draw (5)--(11);\draw (11)--(8); \draw (8)--(10);\draw (10)--(6); \draw (0)--(14); \draw (14)--(2); \draw (2)--(12); \draw (12)--(4); \draw (4)--(11); \draw (11)--(1); \draw (1)--(10); \draw (10)--(3); \draw (3)--(13);
    \end{tikzpicture}
    \hspace{12mm}
    \begin{tikzpicture}[scale=0.8]
    \tikzstyle{vertex}=[circle, draw, fill=black, inner sep=0pt, minimum size=5pt]  
    \node[vertex, label= left: 3] (0) at (-4.2, 1.6) {};
    \node[vertex, label= left: 1] (1) at (-3.7, 2.4) {};
    \node[vertex, label= left: 1] (2) at (-3.2, 3.2) {};
    \node[vertex, label= left: 1] (3) at (-2.7, 4) {};
    \node[vertex, label= left: 1] (4) at (-2.2, 4.8) {};
    \node[vertex, label= right: 2] (5) at (1.4, 4.8) {};
    \node[vertex, label= right: 2] (6) at (1.9, 4.0) {};
    \node[vertex, label= right: 2] (7) at (2.4, 3.2) {};
    \node[vertex, label= right: 2] (8) at (2.9, 2.4) {};
    \node[vertex, label= right: 1] (9) at (3.4, 1.6) {};
    \node[vertex, label= below: 3] (10) at (-2.7, -0.6) {};
    \node[vertex, label= below: 3] (11) at (-1.7, -0.6) {};
    \node[vertex, label= below: 3] (12) at (-0.7, -0.6) {};
    \node[vertex, label= below: 3] (13) at (0.3, -0.6) {};
    \node[vertex, label= below: 2] (14) at (1.3, -0.6) {};

    \draw (4)--(5); \draw (5)--(3); \draw (3)--(6);\draw (6)--(2); \draw (2)--(7); \draw (7)--(1); \draw (1)--(8);\draw (8)--(0); \draw (0)--(9); \draw (9)--(14); \draw (13)--(9); \draw (13)--(7); \draw (7)--(12); \draw (12)--(5); \draw (5)--(11);\draw (11)--(8); \draw (8)--(10);\draw (10)--(6); \draw (0)--(14); \draw (14)--(2); \draw (2)--(12); \draw (12)--(4); \draw (4)--(11); \draw (11)--(1); \draw (1)--(10); \draw (10)--(3); \draw (3)--(13);
    \end{tikzpicture}
    \caption{Two 3-colourings of a $C_4$-free graph which are not Kempe equivalent}
    \label{fig:C4-free}
\end{figure}

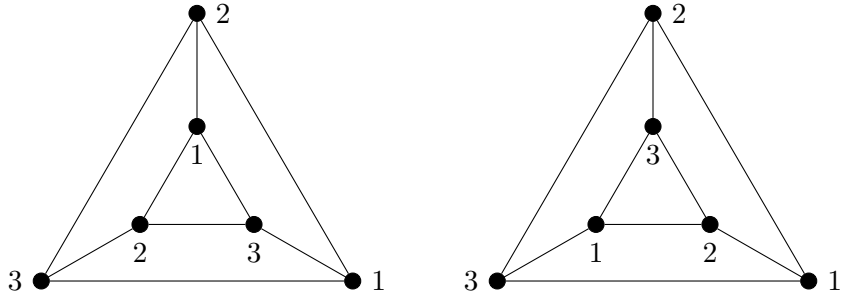
\begin{figure}[!ht] 
\center
\begin{tikzpicture}
    \tikzstyle{whitenode}=[draw,circle,fill=white,minimum size=9pt,inner sep=0pt]
    \tikzstyle{blacknode}=[draw,circle,fill=black,minimum size=6pt,inner sep=0pt]
\draw (0,0) node[blacknode] (c) [label=-90:$2$] {} 
 ++(0:1.5cm) node[blacknode] (b) [label=-90:$3$] {}
 ++(120:1.5cm) node[blacknode] (a) [label=-90:$1$] {};
 
 \draw (a)
-- ++(90:1.5cm) node[blacknode] (a2) [label=0:$2$] {};
  \draw (b)
-- ++(-30:1.5cm) node[blacknode] (b2) [label=0:$1$] {};
  \draw (c)
-- ++(-150:1.5cm) node[blacknode] (c2) [label=180:$3$] {};
 
 \draw (a) -- (b);
 \draw (c) -- (b);
 \draw (a) -- (c);
 \draw (a2) -- (b2);
 \draw (c2) -- (b2);
 \draw (a2) -- (c2);
 
 \draw (6,0) node[blacknode] (c) [label=-90:$1$] {} 
 ++(0:1.5cm) node[blacknode] (b) [label=-90:$2$] {}
 ++(120:1.5cm) node[blacknode] (a) [label=-90:$3$] {};
 
 \draw (a)
-- ++(90:1.5cm) node[blacknode] (a2) [label=0:$2$] {};
  \draw (b)
-- ++(-30:1.5cm) node[blacknode] (b2) [label=0:$1$] {};
  \draw (c)
-- ++(-150:1.5cm) node[blacknode] (c2) [label=180:$3$] {};
 
 \draw (a) -- (b);
 \draw (c) -- (b);
 \draw (a) -- (c);
 \draw (a2) -- (b2);
 \draw (c2) -- (b2);
 \draw (a2) -- (c2);

\end{tikzpicture}
\caption{Two $3$-colourings of the triangular prism which are not Kempe equivalent \cite{Heuvel2013}.}
\label{fig:prism}
\end{figure}

\begin{lemma}\label{lem:4-vertex}
    Suppose $H$ is a graph on at most four vertices. Every $H$-free graph is Kempe connected if and only if $H$ is an induced subgraph of $P_4$.
\end{lemma}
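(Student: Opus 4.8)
The plan is to prove the two implications separately. The forward direction is immediate: if $H$ is an induced subgraph of $P_4$, then any graph containing an induced $P_4$ also contains an induced copy of $H$, so every $H$-free graph is $P_4$-free and hence Kempe connected by Theorem~\ref{thm:p4-free}. For the converse I will prove the contrapositive: if $H$ has at most four vertices and is \emph{not} an induced subgraph of $P_4$, then some $H$-free graph is not Kempe connected. The idea is that only three ``gadgets'' are needed --- the triangle-free graph of Lemma~\ref{lem:triangle-free}, the $C_4$-free graph of Lemma~\ref{lem:c4-free}, and the triangular prism of Figure~\ref{fig:prism} --- dispatched according to a trichotomy on which of $K_3$ and $3K_1$ occurs in $H$ as an induced subgraph.

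In the first case $K_3$ is an induced subgraph of $H$; then every triangle-free graph is $H$-free, so Lemma~\ref{lem:triangle-free} supplies the required graph. In the second case $K_3$ is not an induced subgraph of $H$ but $3K_1$ is; then every graph with no independent set of size $3$ is $H$-free, and the triangular prism works: each of its two triangles meets any independent set in at most one vertex, so it has no induced $3K_1$, and (being $3$-chromatic) it is not Kempe connected since two of its $3$-colourings are not Kempe equivalent by Figure~\ref{fig:prism}. This case covers $H \in \{3K_1,\,4K_1,\,K_2+2K_1,\,P_3+K_1,\,K_{1,3}\}$, each of which has $3K_1$ as an induced subgraph. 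In the remaining case $H$ contains neither $K_3$ nor $3K_1$ as an induced subgraph, i.e.\ $H$ is triangle-free with no independent set of size $3$; a short check over the finitely many graphs on at most four vertices shows that the only such graphs are the induced subgraphs of $P_4$ together with $C_4$ and $2K_2$. As $H$ is assumed not to be an induced subgraph of $P_4$, this leaves $H=C_4$, handled by Lemma~\ref{lem:c4-free}, and $H=2K_2$, handled again by the triangular prism, which is $2K_2$-free because any two vertex-disjoint edges of it are joined by a further edge (equivalently, its complement $C_6$ has no induced $C_4$).

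I do not expect a real obstacle here; the work is bookkeeping. The one point that needs a genuine, if small, argument is the $2K_2$ case: since $2K_2$ has no induced $3K_1$, it is not caught by the independence-number argument of the second case, and the $2K_2$-freeness of the prism must be checked directly, which I would do via its complement, $C_6$, and the fact that $C_6$ has no induced $C_4$. I would also verify the enumeration used in the third case --- equivalently, that every graph on at most four vertices with no induced $K_3$ and no induced $3K_1$ is an induced subgraph of $P_4$, $C_4$, or $2K_2$ --- which together with the trichotomy confirms that the case analysis is exhaustive.
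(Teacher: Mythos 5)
Your proposal is correct and follows essentially the same route as the paper: the forward direction via Theorem~\ref{thm:p4-free}, and for the converse the same three gadgets (the triangle-free graph of Figure~\ref{fig:triangle-free}, the $C_4$-free graph of Figure~\ref{fig:C4-free}, and the triangular prism of Figure~\ref{fig:prism}) dispatched by the same case split on whether $H$ contains $K_3$ or $3K_1$, with the residual cases $C_4$ and $2K_2$. Your added verifications (that the prism is $3K_1$-free and $2K_2$-free, and that the enumeration of $(K_3,3K_1)$-free graphs on at most four vertices is exhaustive) are correct details the paper leaves implicit.
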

\begin{proof}
    By Theorem \ref{thm:p4-free}, if $H$ is an induced subgraph of $P_4$ then every $H$-free graph is Kempe connected. So we may assume that $H$ is not an induced subgraph of $P_4$.
    
    Suppose $H$ contains a triangle, then see Figure \ref{fig:triangle-free} for an $H$-free graph which is not Kempe connected. Suppose $H$ contains a 3$K_1$, then see Figure \ref{fig:prism} for an $H$-free graph which is not Kempe connected. There are two (triangle, 3$K_1$)-free graphs on at most four vertices which are not induced subgraphs of $P_4$. They are 2$K_2$ and $C_4$. See Figure \ref{fig:prism} for a 2$K_2$-free graph which is not Kempe connected, and see Figure \ref{fig:C4-free} for a $C_4$-free graph which is not Kempe connected.
\end{proof}

\begin{lemma}\label{lem:5-vertex}
    For every graph $H$ on five vertices, there exists an $H$-free graph which is not Kempe connected.
\end{lemma}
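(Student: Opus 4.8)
The plan is to deduce the lemma from the small non-Kempe-connected graphs already exhibited, using that forbidding a small induced subgraph automatically forbids every graph containing it. If $H$ (a five-vertex graph) contains an induced triangle, then every triangle-free graph is $H$-free, so the graph of Figure~\ref{fig:triangle-free}, which is not Kempe connected by Lemma~\ref{lem:triangle-free}, witnesses the claim. The other witness I would use is the triangular prism of Figure~\ref{fig:prism} (the Cartesian product of $K_3$ and $K_2$): it is $3$-regular on six vertices with nine edges, so every five-vertex induced subgraph, obtained by deleting one degree-$3$ vertex, has exactly $9-3=6$ edges; moreover every independent set meets each of its two triangles in at most one vertex, hence has size at most two. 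Therefore the prism is $3K_1$-free, and it is $C_5$-free because an induced $C_5$ would have only five edges; and it is not Kempe connected, since its $3$-colourings do not form a Kempe class. Hence the claim holds whenever $H$ contains an induced $K_3$, contains an induced $3K_1$, or equals $C_5$.

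It then remains to observe that these cases are exhaustive: every graph $H$ on five vertices contains an induced $K_3$ or an induced $3K_1$ unless $H=C_5$. This is the standard small instance of Ramsey's theorem together with a description of the extremal graph: if $H$ is triangle-free and $3K_1$-free on five vertices, then $\alpha(H)\le 2$, so $\overline{H}$ is also triangle-free; by Mantel's theorem a triangle-free graph on five vertices has at most six edges, with equality only for $K_{2,3}$, which is excluded for both $H$ and $\overline{H}$ here (it contains $3K_1$, and its complement contains a triangle), so $e(H)\le 5$ and $e(\overline{H})\le 5$; since $e(H)+e(\overline{H})=10$, both equal five. A connected graph on five vertices with five edges is unicyclic with an induced cycle which is neither a triangle nor (since a $C_4$ with a pendant vertex contains $3K_1$) a $4$-cycle, so it is $C_5$; the disconnected cases have fewer than five edges by the componentwise edge bound.

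Assembling the pieces proves Lemma~\ref{lem:5-vertex}: if $H$ contains an induced $K_3$ use Figure~\ref{fig:triangle-free}; in the remaining cases, namely $H$ contains an induced $3K_1$ or $H=C_5$, use the triangular prism of Figure~\ref{fig:prism}. I do not expect a real obstacle; the only points needing care are the small Ramsey case analysis pinning down $C_5$ as the unique triangle-free, $3K_1$-free graph on five vertices, and the (immediate) edge-count argument showing that the six-vertex prism has no induced $C_5$.
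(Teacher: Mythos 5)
Your proposal is correct and follows essentially the same route as the paper: split according to whether $H$ contains an induced $K_3$ (use the graph of Figure~\ref{fig:triangle-free}) or an induced $3K_1$ or equals $C_5$ (use the triangular prism of Figure~\ref{fig:prism}, which is $(3K_1,C_5)$-free). The only difference is that you supply an explicit Mantel/Ramsey-style argument that $C_5$ is the unique triangle-free, $3K_1$-free graph on five vertices, and verify the prism's properties by edge counts, where the paper simply asserts these facts as easily checkable (e.g.\ by enumeration); your added details are sound.
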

\begin{proof}
    Note that there are 34 graphs on 5 vertices. We partition them into three groups: Group 1 consists of the 20 graphs which contain a triangle. Group 2 consists of the 13 graphs which are triangle-free but contain a 3$K_1$. Group 3 consists of $C_5$. These facts can be easily verified (for example, on houseofgraphs.org).

    If $H$ is in Group 1, then the lemma follows from Lemma \ref{lem:triangle-free}. The triangular prism $\overline{C_6}$ in Figure \ref{fig:prism} is a (3$K_1$, $C_5$)-free graph which is not Kempe connected. Thus, if $H$ is in Group 2 or Group 3, then there is an $H$-free graph which is not Kempe connected.
\end{proof}

Theorem \ref{thm:p4-free only} follows from Lemmas \ref{lem:4-vertex} and \ref{lem:5-vertex}.

\section{Kempe Frozen Colourings of \texorpdfstring{$2K_2$} ~-free graphs \label{sec:2K2-free K-frozen}}

In \cite{frozen}, several classes of $2K_2$-free graphs with frozen colourings are given. We will show that for some of these classes, the  frozen colourings are also Kempe frozen. Also in \cite{frozen}, an operation was given which transforms a $k$-colourable graph with a frozen $(k+1)$-colouring into a $(k+1)$-colourable graph with a frozen $(k+2)$-colouring, and preserves being $2K_2$-free. (There are some restrictions on the graph, the colouring, and the frozen colouring.)  We will show that the operation does the same for Kempe frozen colourings.

A set of vertices of a graph $G$ is called a {\em clique} if every pair of them is  adjacent. 
A {\em clique partition} of a graph is a partition of its vertices into cliques, and a partition into at most $k$ cliques is called a {\em k-clique partition}.  
A {\em frozen $k$-clique partition} of a graph $G$ is a partition of its vertices into $k$ nonempty cliques such that each vertex $v$ has a nonneighbour in every clique of the partition  other than the clique containing it; in other words, $V(G)\setminus N(v)$ contains a vertex of each of the $k$ cliques. A {\em Kempe frozen k-clique partition} of a graph $G$ is a partition of the vertices into $k$ nonempty cliques, such that the subgraph induced by any two of the cliques is a graph whose complement is connected.  

Because the classes of graphs of \cite{frozen} are dense, it is easier to visualize their complements, which are $C_4$-free graphs which admit frozen clique partitions.

\begin{remark} \label{rem:Kfrozen} 
{\em 
A Kempe frozen $k$-colouring of a graph corresponds to a Kempe frozen $k$-clique partition of its complement.
In the frozen clique partitions in this section, all cliques have size 2, so for the clique partitions to be Kempe frozen, for any two cliques of the clique partition, there must be at most one edge joining them, thus the subgraph induced by the vertices of the two cliques is either $2K_2$ or $P_4$.}
\end{remark}

\medskip

For an integer $q \ge 2$, $\overline{D_q}$  is the graph with $4q+2$ vertices $\{u_i: i=0,1,\ldots,q+1\} \cup \ \{\cup \{v_{i1}, v_{i2}, v_{i3}\}: i=1,2,\ldots,q\}$\\ whose edges are: 
\begin{itemize}[noitemsep,topsep=0pt]
\item  the edges of the Hamiltonian cycle $C$: $u_0$, $u_1$, \ldots, $u_{q+1}$, $v_{11}$, $v_{12}$, $v_{13}$, $v_{21}$, $v_{22}$, $v_{33}$,$\ldots$, $v_{q1}$, $v_{q2}$, $v_{q3}$, $u_0$.
\item edges $u_i v_{i2}$ for $i = 1, 2, \ldots, q$
\item edges $v_{i1}v_{i3}$ for $i = 1, 2, \ldots, q$
\end{itemize}

\vspace{5pt}
See Figure \ref{fig:D2} for $\overline{D_2}$ and and Figure \ref{fig:D3} for $\overline{D_3}$.
\vspace{5pt}
\begin{figure}[ht!]
\centering
\begin{tikzpicture}[scale=2.8]
\tikzstyle{vertex}=[circle, draw, fill=black, inner sep=0pt, minimum size=5pt]    
        \draw (0,0) circle (1); 
        \node[vertex, label=right:2](0) at (1,0) {};
    \node[vertex, label=above right:2](1) at (cos{36},sin{36}) {};
    \node[vertex, label=above:2](2) at (cos{72},sin{72}) {};
    \node[vertex, label=above:1](3) at (cos{108}, sin{108}) {};
    \node[vertex, label=above left:1](4) at (cos{144}, sin{144}) {};
    \node[vertex, label=left:1](5) at (cos{180}, sin{180}) {};
    \node[vertex, label=below left:4](6) at (cos{216}, sin{216}) {};
    \node[vertex, label=below:4](7) at (cos{252},sin{252}) {};
    \node[vertex, label=below:3](8) at (cos{288}, sin{288}) {};
    \node[vertex, label=below right:3](9) at (cos{324},sin{324}) {};    \draw(0)--(2);\draw(1)--(7);\draw(3)--(5);\draw(4)--(8);
\end{tikzpicture}
\hspace{0mm}
\begin{tikzpicture}[scale=2.8]
\tikzstyle{vertex}=[circle, draw, fill=black, inner sep=0pt, minimum size=5pt]    
        \draw (0,0) circle (1); 
        \node[vertex, label=right:4](0) at (1,0) {};
    \node[vertex, label=above right:2](1) at (cos{36},sin{36}) {};
    \node[vertex, label=above:3](2) at (cos{72},sin{72}) {};
    \node[vertex, label=above:3](3) at (cos{108}, sin{108}) {};
    \node[vertex, label=above left:1](4) at (cos{144}, sin{144}) {};
    \node[vertex, label=left:5](5) at (cos{180}, sin{180}) {};
    \node[vertex, label=below left:5](6) at (cos{216}, sin{216}) {};
    \node[vertex, label=below:2](7) at (cos{252},sin{252}) {};
    \node[vertex, label=below:1](8) at (cos{288}, sin{288}) {};
    \node[vertex, label=below right:4](9) at (cos{324},sin{324}) {};    
    \draw(0)--(2);\draw(1)--(7);\draw(3)--(5);\draw(4)--(8);
     
\end{tikzpicture}

\caption{A $C_4$-free graph $\overline{D_2}$ with a 4-clique-partition (left) and a Kempe frozen 5-clique-partition (right). The numbers indicate which clique a vertex is in. Equivalently, the numbers indicate a 4-colouring of the complement $D_2$ of the graph shown (left) and a Kempe frozen 5-colouring of $D_2$ (right).}
\label{fig:D2}

\begin{tikzpicture}[scale=2.8]
\tikzstyle{vertex}=[circle, draw, fill=black, inner sep=0pt, minimum size=5pt]    
        \draw (0,0) circle (1); 
        \node[vertex, label=right:3](0) at (1,0) {};
    \node[vertex, label=above right:3](1) at (cos{26},sin{26}) {};
    \node[vertex, label=above right:2](2) at (cos{52},sin{52}) {};
    \node[vertex, label=above:2](3) at (cos{78},sin{78}) {};
    \node[vertex, label=above:2](4) at (cos{104}, sin{104}) {};
    \node[vertex, label=above left:1](5) at (cos{130}, sin{130}) {};
    \node[vertex, label=above left:1](6) at (cos{156}, sin{156}) {};
    \node[vertex, label=left:1](7) at (cos{180}, sin{180}) {};
    \node[vertex, label=below left:6](8) at (cos{206}, sin{206}) {};
    \node[vertex, label=below left:5](9) at (cos{232},sin{232}) {};
    \node[vertex, label=below:5](10) at (cos{258}, sin{258}) {};
    \node[vertex, label=below:4] (11) at (cos{284},sin{284}) {};    
    \node[vertex, label=below right:4] (12) at (cos{310},sin{310}) {}; 
    \node[vertex, label=below right:3] (13) at (cos{336},sin{336}) {};  
    \draw(0)--(9);\draw(1)--(13);\draw(2)--(4);\draw(3)--(10);
    \draw(5)--(7);\draw(6)--(11);
     
\end{tikzpicture}
\hspace{0mm}
\begin{tikzpicture}[scale=2.8]
\tikzstyle{vertex}=[circle, draw, fill=black, inner sep=0pt, minimum size=5pt]    
        \draw (0,0) circle (1); 
        \node[vertex, label=right:3](0) at (1,0) {};
    \node[vertex, label=above right:5](1) at (cos{26},sin{26}) {};
    \node[vertex, label=above right:5](2) at (cos{52},sin{52}) {};
    \node[vertex, label=above:2](3) at (cos{78},sin{78}) {};
    \node[vertex, label=above:4](4) at (cos{104}, sin{104}) {};
    \node[vertex, label=above left:4](5) at (cos{130}, sin{130}) {};
    \node[vertex, label=above left:1](6) at (cos{156}, sin{156}) {};
    \node[vertex, label=below left:7](7) at (cos{180}, sin{180}) {};
    \node[vertex, label=below left:7](8) at (cos{206}, sin{206}) {};
    \node[vertex, label=below:3](9) at (cos{232},sin{232}) {};
    \node[vertex, label=below:2](10) at (cos{258}, sin{258}) {};
    \node[vertex, label=below:1] (11) at (cos{284},sin{284}) {};    
    \node[vertex, label=below right:6] (12) at (cos{310},sin{310}) {}; 
    \node[vertex, label=below right:6] (13) at (cos{336},sin{336}) {};  
    \draw(0)--(9);\draw(1)--(13);\draw(2)--(4);\draw(3)--(10);
    \draw(5)--(7);\draw(6)--(11);
     
\end{tikzpicture}

\caption{A $C_4$-free graph $\overline{D_3}$ with a 6-clique-partition (left) and a Kempe frozen 7-clique-partition (right). The numbers indicate which clique a vertex is in. Equivalently, the numbers indicate a 6-colouring of the complement $D_3$ of the graph shown (left) and a Kempe frozen 7-colouring of $D_3$ (right).}
\label{fig:D3}
\end{figure}

Consider the ($2q+1$)-colouring $\psi$ of $D_q$ which partitions the vertices into the following colour classes: \\
$\{u_1, v_{12}\}, \{u_2, v_{22}\}, \ldots, \{u_q, v_{q2}\}, 
\{u_{q+1},v_{11}\}$, 
$\{v_{13}, v_{21}\}, \{v_{23}, v_{31}\}, \dots, \{v_{q-1\mkern3mu3},
v_{q\mkern3mu1}\}, \{v_{q3},u_0\}$


\begin{theorem}[\cite{frozen}]\label{thm:dq}
    For $q \ge 2$, $D_q$ is a 2$K_2$-free graph such that
\[ \chi(D_q) = \omega(D_q) =
\begin{cases}
    (3$q$+2)/2 & \textit{if } q~ \textit{is even} \\
    (3$q$+3)/2 & \textit{if } q~ \textit{is odd}
\end{cases}\]
and $\psi$ is a frozen colouring of $D_q$.
\end{theorem}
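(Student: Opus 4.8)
The plan is to work entirely in the complement $\overline{D_q}$, whose structure is transparent. By definition $\overline{D_q}$ is the Hamiltonian cycle $C$ together with the chords $u_iv_{i2}$ and $v_{i1}v_{i3}$ for $1\le i\le q$; equivalently, it consists of the $q$ vertex-disjoint triangles $B_i=\{v_{i1},v_{i2},v_{i3}\}$ and the induced path $u_0u_1\cdots u_{q+1}$, linked by the single edges $v_{i3}v_{(i+1)1}$ ($1\le i<q$), $v_{q3}u_0$, and $u_{q+1}v_{11}$ of $C$, plus the chords $u_iv_{i2}$ attaching $u_i$ to $B_i$. From this I would write down the (open) neighbourhood of every vertex and use it throughout. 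There are three things to prove: (i) $\overline{D_q}$ has no induced $C_4$, so that $D_q$ is $2K_2$-free; (ii) $\alpha(\overline{D_q})$ and the clique cover number of $\overline{D_q}$ both equal $N$, where $N=(3q+2)/2$ if $q$ is even and $N=(3q+3)/2$ if $q$ is odd, so that $\omega(D_q)=\chi(D_q)=N$; and (iii) the colour classes of $\psi$ partition $V(\overline{D_q})$ into $2q+1$ cliques of $\overline{D_q}$, no one of which --- other than the class containing a given vertex $v$ --- lies inside the neighbourhood of $v$, so that $\psi$ is a frozen colouring of $D_q$.

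For (i), the key preliminary fact is that the only triangles of $\overline{D_q}$ are the blocks $B_i$: one checks from the neighbourhood list that $N(u_i)\cap N(v_{i2})=\varnothing$ and $N(v_{i1})\cap N(v_{i3})=\{v_{i2}\}$, while no three consecutive vertices of the $(4q+2)$-cycle $C$ form a triangle. To exclude a $C_4$ I would split on how many of its four edges are chords. With no chord it would be a $4$-cycle inside $C$, impossible since $4q+2>4$. With exactly one chord it would consist of that chord together with a path of three cycle edges, forcing the two ends of the chord to lie at distance $3$ on $C$; but $v_{i1}v_{i3}$ joins vertices at distance $2$ and $u_iv_{i2}$ joins vertices at distance at least $4$ when $q\ge 2$ (a short computation from the cyclic order), so this cannot happen. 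With two or more chords, a short case analysis on the two chord types shows the (at most four) vertices involved always span a path or a triangle-with-a-pendant-edge, never a $C_4$. This case analysis is the most tedious step of the whole proof, but each subcase is immediate.

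For (ii), recall that $\omega(D_q)=\alpha(\overline{D_q})$, that $\chi(D_q)$ equals the clique cover number of $\overline{D_q}$, and that the clique cover number is always at least $\alpha$. So it suffices to exhibit a clique cover of $\overline{D_q}$ of size $N$ and an independent set of $\overline{D_q}$ of size $N$. The clique cover is the $q$ triangles $B_i$ together with a minimum clique cover of the path $u_0\cdots u_{q+1}$ by edges (and one singleton when $q$ is odd), of total size $q+\lceil(q+2)/2\rceil=N$. For the independent set, take $\{v_{i3}:1\le i\le q\}\cup\{u_1,u_3,\dots,u_{q+1}\}$ when $q$ is even, and $\{v_{i3}:1\le i\le q-1\}\cup\{v_{q2}\}\cup\{u_0,u_2,\dots,u_{q+1}\}$ when $q$ is odd; in each case a direct check against the neighbourhood list shows the set is independent and has exactly $N$ vertices. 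Hence $\alpha(\overline{D_q})=\chi(D_q)=\omega(D_q)=N$, which is the stated formula.

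For (iii), note first that every pair listed in $\psi$ is an edge of $\overline{D_q}$ --- the pairs $\{u_i,v_{i2}\}$ are chords, and $\{u_{q+1},v_{11}\}$, $\{v_{i3},v_{(i+1)1}\}$, $\{v_{q3},u_0\}$ are edges of $C$ --- and that these $2q+1$ pairs exhaust the $4q+2$ vertices, so $\psi$ is a proper $(2q+1)$-colouring of $D_q$ corresponding to a partition of $\overline{D_q}$ into $2q+1$ cliques, each an edge. Now $\psi$ fails to be frozen at a vertex $v$ precisely when some $\psi$-class $K$ with $v\notin K$ has both of its vertices adjacent to $v$ in $\overline{D_q}$; then $v$ is a common neighbour of the two ends of the edge $K$, so $\{v\}\cup K$ is a triangle of $\overline{D_q}$ and hence, by the preliminary fact of (i), equals some block $B_i$, forcing $K\subseteq B_i$. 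But no $\psi$-class is contained in a block: each class either contains one of $u_0,\dots,u_{q+1}$ or joins two distinct blocks. This contradiction shows $\psi$ is frozen, completing the proof.
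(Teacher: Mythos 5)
The paper does not prove Theorem~\ref{thm:dq}; it imports it verbatim from \cite{frozen}, so there is no in-paper argument to compare against. Your proposal supplies a correct, self-contained proof, and it works in exactly the spirit the paper advocates (Remark~\ref{rem:Kfrozen} and Figures~\ref{fig:D2}--\ref{fig:D3} all treat $\overline{D_q}$ as a $C_4$-free graph with clique partitions): translating $2K_2$-freeness, $\omega$, $\chi$, and frozenness of $D_q$ into $C_4$-freeness, $\alpha$, clique cover number, and a ``no class inside a common neighbourhood'' condition for $\overline{D_q}$. The three parts check out: the chord analysis rules out $C_4$ (distinct chords of $\overline{D_q}$ are pairwise vertex-disjoint, so a $4$-cycle carries at most two, necessarily opposite, chords, and each subcase fails); the matching of the $u$-path together with the triangles $B_i$ gives a clique cover of size $N$ while your explicit independent sets have size $N$, pinning $\alpha=\chi(D_q)=\omega(D_q)=N$; and the observation that a freezing failure at $v$ against a class $K$ forces $\{v\}\cup K$ to be a triangle, hence some $B_i$, while no $\psi$-class lies in a single $B_i$, gives frozenness. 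One sentence is loosely worded: ``no three consecutive vertices of $C$ form a triangle'' is literally false, since $v_{i1},v_{i2},v_{i3}$ are consecutive on $C$ and do form the triangle $B_i$; what you mean (and what the argument needs) is that no triangle uses three edges of $C$, so every triangle contains a chord, and the common-neighbourhood computations for the two chord types then show the $B_i$ are the only triangles. With that rephrasing the proof is complete.
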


\begin{lemma} \label{lem:DqKempeFrozen}
For $q \ge 2$, the $(2q+1)$-colouring $\psi$ of $D_q$ is Kempe frozen.
\end{lemma}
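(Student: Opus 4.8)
The plan is to upgrade the frozenness of $\psi$ (supplied by Theorem \ref{thm:dq}) to Kempe frozenness using only two further facts, both already in hand: $D_q$ is $2K_2$-free (Theorem \ref{thm:dq}), and every colour class of $\psi$ has exactly two vertices. The latter is a one-line count: $D_q$ has $4q+2$ vertices, $\psi$ uses $2q+1$ colours, and every class in the displayed definition of $\psi$ is a pair, so all $2q+1$ classes have size exactly two and in particular each colour is used.

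The main step is to fix two distinct colour classes $X=\{x_1,x_2\}$ and $Y=\{y_1,y_2\}$ of $\psi$ and show $D_q[X\cup Y]$ is connected. Since colour classes are independent sets, $D_q[X\cup Y]$ is bipartite with both sides of size two, so its edge set is contained in $\{x_1y_1,x_1y_2,x_2y_1,x_2y_2\}$. Because $\psi$ is frozen, each of $x_1,x_2$ is adjacent to a vertex of colour $Y$ and each of $y_1,y_2$ is adjacent to a vertex of colour $X$; hence $D_q[X\cup Y]$ has no isolated vertex. An elementary check shows that a bipartite graph with both parts of size two and no isolated vertex must be $2K_2$, $P_4$, or $C_4$. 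Since $D_q[X\cup Y]$ is an induced subgraph of the $2K_2$-free graph $D_q$, the option $2K_2$ is excluded, so $D_q[X\cup Y]$ is $P_4$ or $C_4$, and in particular connected. As $X$ and $Y$ were arbitrary and all $2q+1$ colours appear, $\psi$ is Kempe frozen.

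I do not expect a genuine obstacle: the heavy lifting (the construction of $D_q$, its $2K_2$-freeness, and the verification that $\psi$ is frozen) is inherited from \cite{frozen} through Theorem \ref{thm:dq}, and the only new content is the observation that, for two-element colour classes, "frozen" and "Kempe frozen" differ exactly by the bipartite pattern $2K_2$, which $2K_2$-freeness forbids. (Equivalently, as in Remark \ref{rem:Kfrozen}, one could argue in the complement $\overline{D_q}$ that any two cliques of the corresponding clique partition are joined by at most one edge, but the direct argument above seems shorter.) The only point deserving a moment's care is the confirmation that every class of $\psi$ is a pair, since a singleton class would make the minimum-degree argument vacuous and would need separate handling; for $\psi$ this does not arise.
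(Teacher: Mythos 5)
Your proof is correct, but it takes a different route from the paper's. The paper's proof of Lemma \ref{lem:DqKempeFrozen} is an appeal to inspection: it invokes Remark \ref{rem:Kfrozen} and asks the reader to verify directly in the sparse complement $\overline{D_q}$ that any two cliques of the clique partition are joined by at most one edge (so that each pair of cliques induces $2K_2$ or $P_4$ there, equivalently $C_4$ or $P_4$ in $D_q$). You reach the same conclusion without ever looking at the structure of $\overline{D_q}$: you deduce it purely from the two facts recorded in Theorem \ref{thm:dq}, namely that $\psi$ is frozen (which rules out isolated vertices in the union of two colour classes) and that $D_q$ is $2K_2$-free (which rules out the only disconnected bipartite pattern on two-plus-two vertices with no isolated vertex). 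What your argument buys is generality and robustness: it actually proves the statement that every frozen colouring of a $2K_2$-free graph in which all colour classes have size two is Kempe frozen, which would also dispose of Lemma \ref{lem:YrKempeFrozen} (whose proof the paper likewise leaves as ``similar''). What the paper's route buys is brevity and consistency with its chosen viewpoint of working in the $C_4$-free complement, where the clique partition is visually evident from the figures. Your closing caveat about singleton classes is well placed, and your verification that all $2q+1$ classes of $\psi$ are pairs summing to $4q+2$ vertices is exactly the check needed to make the argument non-vacuous.
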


\begin{proof}
Let $q \ge 2$. It is easily seen by considering $\overline{D_q}$ and Remark \ref{rem:Kfrozen} that for any two colours, $a$ and $b$, of the colouring $\psi$ of $D_q$, the subgraph induced by the vertices coloured $a$ or $b$ in $D_q$ is connected. 
\end{proof}

\begin{lemma}\label{lem:DqNotKempClass}
For $q \ge 2$, the set $\mathcal{C}_{2q+1}(D_q)$  
of all $(2q+1)$-colourings of $D_q$ 
does not form a Kempe class.
\end{lemma}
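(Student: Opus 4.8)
The plan is to apply Lemma~\ref{lem:Kempefrozen}. We already know from Lemma~\ref{lem:DqKempeFrozen} that $\psi$ is a Kempe frozen $(2q+1)$-colouring of $D_q$, so the only remaining hypothesis to verify is that $D_q$ has at least two $(2q+1)$-colourings giving different partitions of the vertices into colour classes. Once that is established, Lemma~\ref{lem:Kempefrozen} immediately gives that $\mathcal{C}_{2q+1}(D_q)$ is not a Kempe class.

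First I would exhibit a second $(2q+1)$-colouring $\psi'$ of $D_q$ whose colour-class partition differs from that of $\psi$. The natural candidate is a ``trivial Kempe swap'' style modification: since $\chi(D_q) \le (3q+3)/2 < 2q+1$ for $q \ge 2$, the colouring $\psi$ uses more colours than necessary, and there is slack. Concretely, I would look for a vertex $v$ in some colour class $\{v\}$-extendable situation, or more simply observe that because $2q+1$ exceeds $\chi(D_q)$, one can recolour: take any proper $\chi(D_q)$-colouring of $D_q$ and extend it arbitrarily to use exactly $2q+1$ colours (e.g. by splitting a large colour class, which exists since $\chi(D_q) < 2q+1$ forces some colour class of size $\ge 2$, or by reassigning an isolated-in-its-pair vertex). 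Since $\psi$ partitions $V(D_q)$ into $2q+1$ classes each of size exactly $2$ (the $4q+2$ vertices split into $2q+1$ pairs), any colouring with an unequal distribution of class sizes — in particular one coming from a $\chi(D_q)$-colouring padded out — has a different partition. I would spell this out by naming one explicit such colouring on the vertex set $\{u_i\}\cup\{v_{ij}\}$.

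The cleanest concrete construction: in $\psi$ the pair $\{u_{q+1}, v_{11}\}$ is a colour class; I would check that $u_{q+1}$ and $v_{11}$ are non-adjacent in $D_q$ (equivalently adjacent in $\overline{D_q}$ along the Hamiltonian cycle), and that there is some colour $c$ not appearing in $N_{D_q}(u_{q+1})$, so recolouring $u_{q+1}$ to $c$ yields a proper $(2q+1)$-colouring $\psi'$ in which colour $c$'s class has size $3$ and the old class of $\{u_{q+1},v_{11}\}$ has size $1$ — a different partition. The existence of such a free colour $c$ follows because $u_{q+1}$ has bounded degree in $D_q$ relative to $2q+1$: one computes $\deg_{D_q}(u_{q+1})$ from the complement description ($u_{q+1}$ misses only its two cycle-neighbours and, possibly, a bounded number of chord endpoints in $\overline{D_q}$), so $N_{D_q}(u_{q+1})$ omits at least two colours. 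This is the step I expect to be the main (though modest) obstacle: one must be careful to read off adjacencies in $D_q$ correctly from the given description of $\overline{D_q}$, and to confirm that the recoloured vertex genuinely has an available colour and that the resulting partition is not merely a relabelling of $\psi$'s partition. After fixing such a $\psi'$, the two colourings $\psi$ and $\psi'$ have different colour-class partitions, the hypotheses of Lemma~\ref{lem:Kempefrozen} hold, and the conclusion follows.
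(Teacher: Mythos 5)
Your overall strategy is the paper's: combine Lemma~\ref{lem:DqKempeFrozen} with Lemma~\ref{lem:Kempefrozen} and exhibit a second $(2q+1)$-colouring whose partition differs from that of $\psi$. The paper's witness is simply a minimum colouring: since $\chi(D_q)<2q+1$ by Theorem~\ref{thm:dq}, any $\chi(D_q)$-colouring of $D_q$ is in particular a $(2q+1)$-colouring with fewer than $2q+1$ nonempty colour classes, hence its partition differs from that of $\psi$, which uses all $2q+1$ colours. Your first suggestion (a $\chi(D_q)$-colouring, padded or not) is exactly this argument and is fine; note that no padding is even required, since a $k$-colouring need not use all $k$ colours.

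However, your ``cleanest concrete construction'' would fail. You propose to find a colour $c$ not appearing in $N_{D_q}(u_{q+1})$ and recolour $u_{q+1}$ to $c$. No such colour exists: by Theorem~\ref{thm:dq} the colouring $\psi$ is \emph{frozen} (and indeed Kempe frozen implies frozen), which by definition means that every vertex is adjacent to a vertex of every colour other than its own, so no single-vertex recolouring of $\psi$ is possible. Your degree estimate is where this goes wrong: $D_q$ is dense (it is the complement of the $3$-regular-ish graph $\overline{D_q}$), so $u_{q+1}$ has only two non-neighbours in $D_q$ (namely $u_q$ and its own colour-mate $v_{11}$), and one checks that every one of the other $2q$ colour classes of $\psi$ meets $N_{D_q}(u_{q+1})$. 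Dropping that construction and keeping the minimum-colouring witness makes your proof complete and essentially identical to the paper's.
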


\begin{proof}
Let $q \ge 2$. Note that $\chi(D_q) < 2q+1$. By Lemma \ref{lem:DqKempeFrozen}, the colouring $\psi$ of $D_q$ is Kempe frozen and thus not Kempe equivalent to any minimum colouring of $D_q$.  So the set of all $(2q+1)$-colourings of $D_q$ does not form a Kempe class.  
\end{proof}

\begin{figure}[ht!]
\centering
\begin{tikzpicture}[scale=2.8]   
\tikzstyle{vertex}=[circle, draw, fill=black, inner sep=0pt, minimum size=5pt]    
        \draw (0,0) circle (1); 
        \node[vertex, label=right:3](0) at (1,0) {};
    \node[vertex, label=above right:3](1) at (cos{30},sin{30}) {};
    \node[vertex, label=above:2](2) at (cos{60},sin{60}) {};
    \node[vertex, label=above:2](3) at (cos{90}, sin{90}) {};
    \node[vertex, label=above left:2](4) at (cos{120}, sin{120}) {};
    \node[vertex, label=left:1](5) at (cos{150}, sin{150}) {};
    \node[vertex, label=below left:1](6) at (cos{180}, sin{180}) {};
    \node[vertex, label=below:1](7) at (cos{210},sin{210}) {};
    \node[vertex, label=below:4](8) at (cos{240}, sin{240}) {};
    \node[vertex, label=below right:4](9) at (cos{270},sin{270})
    {};
   \node[vertex, label=below right:4](10) at (cos{300},sin{300}) {}; 
   \node[vertex, label=below right:3](11) at (cos{330},sin{330}) {}; 
    \draw(0)--(6);\draw(1)--(11);\draw(2)--(4);\draw(3)--(9); \draw(5)--(7);\draw(8)--(10);
    
\end{tikzpicture}
\hspace{0mm}
\begin{tikzpicture}[scale=2.8]
\tikzstyle{vertex}=[circle, draw, fill=black, inner sep=0pt, minimum size=5pt]    
    \draw (0,0) circle (1); 
    \node[vertex, label=right:1](0) at (1,0) {};
    \node[vertex, label=above right:4](1) at (cos{30},sin{30}) {};
    \node[vertex, label=above:4](2) at (cos{60},sin{60}) {};
    \node[vertex, label=above:2](3) at (cos{90}, sin{90}) {};
    \node[vertex, label=above left:3](4) at (cos{120}, sin{120}) {};
    \node[vertex, label=left:3](5) at (cos{150}, sin{150}) {};
    \node[vertex, label=below left:1](6) at (cos{180}, sin{180}) {};
    \node[vertex, label=below:6](7) at (cos{210},sin{210}) {};
    \node[vertex, label=below:6](8) at (cos{240}, sin{240}) {};
    \node[vertex, label=below right:2](9) at (cos{270},sin{270}){};
    \node[vertex, label=below right:5](10) at (cos{300},sin{300}) {}; 
    \node[vertex, label=below right:5](11) at (cos{330},sin{330}) {}; 
    \draw(0)--(6);\draw(1)--(11);\draw(2)--(4);\draw(3)--(9); \draw(5)--(7);\draw(8)--(10);
\end{tikzpicture}

\caption{A $C_4$-free graph $\overline{Y_2}$ with a 4-clique-partition (left) and a Kempe frozen 6-clique-partition (right). Equivalently, a 4-colouring of the complement $Y_2$  (left) and a Kempe frozen 6-colouring (right).}
\label{fig:Y2}


\vspace{5mm}
\begin{tikzpicture}[scale=2.8]   
\tikzstyle{vertex}=[circle, draw, fill=black, inner sep=0pt, minimum size=5pt]    
        \draw (0,0) circle (1); 
         \node[vertex, label=right:4](0) at (1,0) {};
       \node[vertex, label=above:4](1) at (cos{20},sin{20}) {};
    \node[vertex, label=right:3](2) at (cos{40},sin{40}) {};
    \node[vertex, label=above:3](3) at (cos{60}, sin{60}) {};
    \node[vertex, label=above:3](4) at (cos{80}, sin{80}) {};
    \node[vertex, label=above:2](5) at (cos{100}, sin{100}) {};
    \node[vertex, label=above left:2](6) at (cos{120}, sin{120}) {};
    \node[vertex, label=above left:2](7) at (cos{140},sin{140}) {};
    \node[vertex, label=left:1](8) at (cos{160}, sin{160}) {};
    \node[vertex, label=left:1](9) at (cos{180},sin{180})
    {};
   \node[vertex, label=left:1](10) at (cos{200},sin{200}) {}; 
   \node[vertex, label=below left:6](11) at (cos{220},sin{220}) {}; 
   \node[vertex, label=below left:6](12) at (cos{240},sin{240}) {}; 
   \node[vertex, label=below:6](13) at (cos{260},sin{260}) {}; 
   \node[vertex, label=below:5](14) at (cos{280},sin{280}) {}; 
   \node[vertex, label=below:5](15) at (cos{300},sin{300}) {}; 
   \node[vertex, label=below right:5](16) at (cos{320},sin{320}) {}; 
   \node[vertex, label=below right:4](17) at (cos{340},sin{340}) {}; 
     \draw(0)--(9);\draw(1)--(17);\draw(2)--(4);\draw(3)--(12); \draw(5)--(7);\draw(6)--(15); \draw(8)--(10); \draw(11)--(13);
    \draw(14)--(16);
    
\end{tikzpicture}
\hspace{0mm}
\begin{tikzpicture}[scale=2.8]
\tikzstyle{vertex}=[circle, draw, fill=black, inner sep=0pt, minimum size=5pt]    
        \draw (0,0) circle (1); 
        \node[vertex, label=right:1](0) at (1,0) {};
    \node[vertex, label=right:6](1) at (cos{20},sin{20}) {};
    \node[vertex, label=above:6](2) at (cos{40},sin{40}) {};
    \node[vertex, label=above:3](3) at (cos{60}, sin{60}) {};
    \node[vertex, label=above:5](4) at (cos{80}, sin{80}) {};
    \node[vertex, label=above:5](5) at (cos{100}, sin{100}) {};
    \node[vertex, label=above left:2](6) at (cos{120}, sin{120}) {};
    \node[vertex, label=above left:4](7) at (cos{140},sin{140}) {};
    \node[vertex, label=left:4](8) at (cos{160}, sin{160}) {};
    \node[vertex, label=left:1](9) at (cos{180},sin{180})
    {};
   \node[vertex, label=left:9](10) at (cos{200},sin{200}) {}; 
   \node[vertex, label=below left:9](11) at (cos{220},sin{220}) {}; 
   \node[vertex, label=below left:3](12) at (cos{240},sin{240}) {}; 
   \node[vertex, label=below:8](13) at (cos{260},sin{260}) {}; 
   \node[vertex, label=below:8](14) at (cos{280},sin{280}) {}; 
   \node[vertex, label=below right:2](15) at (cos{300},sin{300}) {}; 
   \node[vertex, label=below right:7](16) at (cos{320},sin{320}) {}; 
   \node[vertex, label=below right:7](17) at (cos{340},sin{340}) {}; 
    \draw(0)--(9);\draw(1)--(17);\draw(2)--(4);\draw(3)--(12); \draw(5)--(7);\draw(6)--(15); \draw(8)--(10); \draw(11)--(13); \draw(14)--(16);
\end{tikzpicture}

\caption{A $C_4$-free graph $\overline{Y_3}$ with a 6-clique-partition (left) and a Kempe frozen 9-clique-partition (right). Equivalently, a 6-colouring of the complement $Y_3$ (left) and a Kempe frozen 9-colouring (right).}
\label{fig:Y3}
\end{figure}

We now define a second class of graphs. For $r \ge 1$, $\overline{Y_r}$ is the graph with $6r$ vertices \\ 
$\{\cup \{v_{i1}, v_{i2}, v_{i3}\}: i=1,2,\ldots,2r\}$\\ whose edges are: 
\begin{itemize}[noitemsep, topsep=0pt]
\item  the edges of a Hamiltonian cycle $C$: $v_{11}, v_{12}, v_{13}, v_{21}, v_{22}, v_{33},\ldots, v_{2r \mkern3mu 1}, v_{2r \mkern3mu 2}, v_{2r \mkern3mu 3}, v_{11}$
\item edges $v_{i1}v_{i3}$ for $i = 1, 2, \ldots, 2r$
\item edges $v_{i2}v_{i+r \mkern3mu 2}$ for $i = 1, 2, \ldots, r$
\end{itemize}

We refer to  $\{v_{i1}, v_{i2}, v_{i3}\}$ as \emph{triangle $i$}. Note that $\overline{Y_r}$ consists of a Hamiltonian cycle $C$ together with $2r$ edges which induce $2r$ vertex-disjoint triangles with consecutive pairs of edges of $C$, and $r$ more edges pairing the middle vertices $v_{i2}$ of ``opposite" triangles. 

See Figure \ref{fig:Y2} for $\overline{Y_2}$  and Figure \ref{fig:Y3} for $\overline{Y_3}$. Note that $\overline{Y_1}$ is $\overline{C_6}$.\\

Consider the $3r$-colouring $\zeta$ of $Y_r$ which partitions the vertices into the following colour classes:
\\
$\{v_{12},v_{r+1 \mkern3mu 2} \}, 
\{v_{22},v_{r+2 \mkern3mu 2} \}, \ldots, 
\{v_{r2},v_{2r \mkern3mu 2} \},  
\{v_{13}, v_{21}\}, 
\{v_{23}, v_{31}\},\ldots,
\{v_{2r \mkern3mu 3}, v_{11}\}$.

\begin{theorem}[\cite{frozen}]\label{thm:Yr}
For $r \ge 2$, $Y_r$ is a $2K_2$-free graph with $\chi(Y_q)=\omega(Y_q)=2r$, and $\zeta$ is a frozen $3r$-colouring of $Y_r$.
\end{theorem}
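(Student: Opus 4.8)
The plan is to carry out the whole argument in the complement $\overline{Y_r}$, using (exactly as in the clique-partition reformulation behind Remark~\ref{rem:Kfrozen}, and immediate from the definitions) that a frozen $k$-colouring of $Y_r$ is literally the same object as a frozen $k$-clique partition of $\overline{Y_r}$. Recall that $\overline{Y_r}$ is $3$-regular: it is the Hamiltonian cycle $C$ of length $6r$ together with the $2r$ ``short'' chords $v_{i1}v_{i3}$, each completing a triangle $T_i=\{v_{i1},v_{i2},v_{i3}\}$ with two consecutive edges of $C$, and the $r$ ``long'' chords $v_{i2}v_{i+r\,2}$ (see Figures~\ref{fig:Y2} and~\ref{fig:Y3}). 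Placing $v_{ij}$ at position $3(i-1)+j$ on $C$ (positions modulo $6r$, triangle indices modulo $2r$), a short chord joins vertices at cyclic distance $2$ and a long chord joins antipodal vertices; with this picture all three assertions become local checks.

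\emph{$2K_2$-freeness.} Since $Y_r$ is $2K_2$-free iff $\overline{Y_r}$ has no induced $C_4$, I would prove the stronger statement that no two vertices of $\overline{Y_r}$ have two common neighbours (this rules out both $C_4$-subgraphs and the diamond $K_4-e$). The only triangles of $\overline{Y_r}$ are the edge-disjoint $T_i$, so there is no diamond. For the $C_4$ part, write out the three neighbours of each of $v_{i1},v_{i2},v_{i3}$ explicitly and observe that two vertices can have two common neighbours only if certain position-equalities hold modulo $6r$, for instance $v_{i-1\,3}=v_{i+1\,3}$, $v_{i\pm r\,2}=v_{i+1\,2}$, or $v_{i3}=v_{i+r\,1}$, and these all fail once $r\ge2$. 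This modular bookkeeping — and the fact that it genuinely needs $r\ge2$, since $\overline{Y_1}=\overline{C_6}$ is the triangular prism, which does contain an induced $C_4$ — is the one fussy step and, I expect, the main obstacle.

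\emph{$\chi(Y_r)=\omega(Y_r)=2r$.} The triangles $T_1,\dots,T_{2r}$ partition $V(\overline{Y_r})$, so any independent set of $\overline{Y_r}$ has at most $2r$ vertices; conversely $\{v_{11},v_{21},\dots,v_{2r\,1}\}$ is an independent set of size $2r$, because a column-$1$ vertex has no neighbour in column $1$ (its neighbours are $v_{i-1\,3},v_{i2},v_{i3}$). Hence $\omega(Y_r)=2r$. Moreover the same partition $\{T_1,\dots,T_{2r}\}$ is a clique cover of $\overline{Y_r}$, equivalently a proper $2r$-colouring of $Y_r$, so $2r=\omega(Y_r)\le\chi(Y_r)\le 2r$.

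\emph{$\zeta$ is a frozen $3r$-colouring.} In $\overline{Y_r}$ the $3r$ colour classes of $\zeta$ are the $r$ long chords $\{v_{i2},v_{i+r\,2}\}$ and the $2r$ cycle edges $\{v_{i3},v_{i+1\,1}\}$; these are actual edges (hence $K_2$'s of $\overline{Y_r}$), and a direct check shows they partition the $6r$ vertices, so $\zeta$ is a proper $3r$-colouring of $Y_r$. For frozenness, use again that $\overline{Y_r}$ is cubic: given a vertex $v$ with $\zeta$-partner $w$, the edge $vw$ accounts for one of the three neighbours of $v$, and the remaining two neighbours always lie in two \emph{distinct} colour classes — for $v=v_{i2}$ they are $v_{i1}\in\{v_{i-1\,3},v_{i1}\}$ and $v_{i3}\in\{v_{i3},v_{i+1\,1}\}$, and the cases $v=v_{i1},v_{i3}$ are symmetric. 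Therefore every colour class other than $v$'s own contains at most one neighbour of $v$, hence at least one non-neighbour of $v$; translated back to $Y_r$, this says $v$ is adjacent to a vertex of every colour distinct from its own, which is precisely the definition of a frozen colouring.
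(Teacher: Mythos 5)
This theorem is imported from \cite{frozen}; the present paper states it with a citation and gives no proof of its own, so there is nothing internal to compare your argument against. Judged on its own terms, your proof is correct and complete. Working in the cubic graph $\overline{Y_r}$ is exactly the right setting (it matches the paper's stated preference, around Remark~\ref{rem:Kfrozen}, for visualizing these dense graphs via their $C_4$-free complements), and all three verifications go through: the ``no two vertices have two common neighbours'' claim does hold for $r\ge 2$ and correctly fails only at $r=1$ (where, e.g., $v_{i3}=v_{i+1}{}_3$ forces $2r\mid 2$, consistent with $\overline{Y_1}=\overline{C_6}$ containing an induced $C_4$); the triangle partition $T_1,\dots,T_{2r}$ together with the independent set $\{v_{11},\dots,v_{2r\,1}\}$ of $\overline{Y_r}$ pins down $\chi(Y_r)=\omega(Y_r)=2r$; and the frozenness argument is a clean use of $3$-regularity --- since each colour class of $\zeta$ is an edge of $\overline{Y_r}$, one neighbour of $v$ is its partner and the other two land in distinct classes, so every other class retains a non-neighbour of $v$. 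One small presentational caution: a $2K_2$ in $Y_r$ corresponds precisely to an \emph{induced} $C_4$ in $\overline{Y_r}$, so your stronger statement (no $C_4$ subgraph at all) is sufficient but slightly more than needed; the diamond exclusion is likewise not required for $2K_2$-freeness, though it does no harm. The only thing your write-up defers is the explicit modular case analysis for the common-neighbour claim; as the sample cases above indicate, it is routine, but it should be written out for a self-contained proof.
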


Note that, $\overline{Y_1} \cong \overline{C_6}$ is the triangular prism, whose chromatic number is 3 and the set of 3-colourings $\mathcal{C}_{3}(\overline{Y_1})$ does not form a Kempe class (see Figure \ref{fig:prism}).

\begin{lemma} \label{lem:YrKempeFrozen}
For $r \ge 2$, the $3r$-colouring $\zeta$ of $Y_r$ is Kempe frozen.
\end{lemma}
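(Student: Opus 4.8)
The plan is to argue exactly as in Lemma~\ref{lem:DqKempeFrozen}, passing to the complement $\overline{Y_r}$ and applying Remark~\ref{rem:Kfrozen}. Under that correspondence the colouring $\zeta$ becomes the clique partition of $\overline{Y_r}$ with parts the ``middle'' pairs $M_i=\{v_{i2},v_{i+r\,2}\}$ ($i=1,\dots,r$) and the ``cycle'' pairs $E_j=\{v_{j3},v_{j+1\,1}\}$ ($j=1,\dots,2r$, indices read modulo $2r$). First I would record that each of these is genuinely a clique (an edge) of $\overline{Y_r}$: $M_i$ is the pairing edge $v_{i2}v_{i+r\,2}$, and $E_j$ is the edge of the Hamiltonian cycle $C$ between $v_{j3}$ and $v_{j+1\,1}$. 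Since every part has size $2$, Remark~\ref{rem:Kfrozen} reduces the lemma to showing that any two distinct parts are joined by at most one edge of $\overline{Y_r}$ (so that the subgraph they induce is $2K_2$ or $P_4$).

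Next I would run the case analysis on the types of the two parts, using that the edge-set of $\overline{Y_r}$ consists only of (i) the cycle edges $v_{i1}v_{i2}$, $v_{i2}v_{i3}$, $v_{i3}v_{i+1\,1}$, (ii) the triangle-closing edges $v_{i1}v_{i3}$, and (iii) the pairing edges $v_{i2}v_{i+r\,2}$. For two middle parts: all four vertices have type $v_{\ast2}$, and the only edges among type-$2$ vertices are the pairing edges, each of which is itself one of the $M_i$; hence distinct $M_i$ and $M_j$ have no edge between them. For a middle part $M_i$ and a cycle part $E_j$: checking the three edge types shows the only possible edges are $v_{i2}v_{j3}$ (needs $j=i$), $v_{i2}v_{j+1\,1}$ (needs $j=i-1$), $v_{i+r\,2}v_{j3}$ (needs $j=i+r$) and $v_{i+r\,2}v_{j+1\,1}$ (needs $j=i+r-1$), so $M_i$ meets $E_j$ in exactly one edge when $j\in\{i-1,i,i+r-1,i+r\}$ and in none otherwise. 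For two cycle parts $E_i$ and $E_j$: the only candidates are the triangle-closing edges $v_{i1}v_{i3}$ and $v_{i+1\,1}v_{i+1\,3}$, which force $j=i-1$ and $j=i+1$ respectively, so distinct $E_i,E_j$ are joined by one edge if $j=i\pm1$ and by none otherwise.

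The single point requiring attention --- and the only use of the hypothesis $r\ge2$ --- is that in the last two cases the listed indices must be pairwise distinct modulo $2r$, so that ``one edge per such $j$'' cannot degenerate into two parallel edges to one part: $i-1,i,i+r-1,i+r$ are distinct mod $2r$ exactly because $r\not\equiv 0,1\pmod{2r}$ when $r\ge2$, and $i-1\not\equiv i+1\pmod{2r}$ for the same reason. (For $r=1$ these coincide, which is why $\overline{Y_1}=\overline{C_6}$ is handled separately as the triangular prism, cf.\ Figure~\ref{fig:prism}.) Granting this routine bookkeeping, every pair of parts induces $2K_2$ or $P_4$ in $\overline{Y_r}$, so $\zeta$ is Kempe frozen by Remark~\ref{rem:Kfrozen}; the whole verification can also simply be read off from Figures~\ref{fig:Y2} and~\ref{fig:Y3}. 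I expect the index-distinctness check to be the only obstacle --- the adjacency enumeration itself is entirely mechanical.
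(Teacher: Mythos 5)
Your proposal is correct and follows exactly the route the paper intends: the paper's ``proof'' is just the one-line remark that the argument is the same as for Lemma~\ref{lem:DqKempeFrozen}, i.e.\ pass to $\overline{Y_r}$ and apply Remark~\ref{rem:Kfrozen}, and your case analysis (cliques are the pairing edges $M_i$ and the cycle edges $E_j$, with at most one cross edge between any two parts, using $r\ge 2$ for the index-distinctness) is precisely the verification the authors leave implicit. The only nitpick is that your distinctness criterion should also note $r+1\not\equiv 0\pmod{2r}$, but that too reduces to $r\neq 1$, so nothing is lost.
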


The proof is similar to the Proof of Lemma \ref{lem:DqKempeFrozen}.

\begin{lemma}\label{lem:YrNotKempClass}
For $r \ge 2$, the set $\mathcal{C}_{3r}(Y_r)$ of all $3r$-colourings of $Y_r$ does not form a Kempe class.
\end{lemma}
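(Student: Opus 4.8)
The plan is to mimic exactly the proof of Lemma \ref{lem:DqNotKempClass}, combining the two preceding facts about $Y_r$: that $\chi(Y_r) = 2r$ (Theorem \ref{thm:Yr}) and that the $3r$-colouring $\zeta$ is Kempe frozen (Lemma \ref{lem:YrKempeFrozen}). First I would observe that since $r \ge 2$, we have $3r > 2r = \chi(Y_r)$, so $\zeta$ uses strictly more than $\chi(Y_r)$ colours. In particular $Y_r$ has a $3r$-colouring whose partition into colour classes differs from that of $\zeta$ — for instance, take any $2r$-colouring (a minimum colouring exists since $\chi(Y_r) = 2r$) and pad it out to a $3r$-colouring by splitting colour classes, which necessarily yields a partition different from the all-pairs-of-size-$2$ partition given by $\zeta$ (a $2r$-colouring has at most $2r < 3r$ nonempty classes, whereas $\zeta$ has exactly $3r$). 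So the hypothesis of Lemma \ref{lem:Kempefrozen} is met: $Y_r$ has at least two $3r$-colourings giving different partitions, and it has a Kempe frozen $3r$-colouring, namely $\zeta$.

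Then I would invoke Lemma \ref{lem:Kempefrozen} directly: since $\zeta$ is Kempe frozen (hence every Kempe swap starting from $\zeta$ merely interchanges two colour classes and cannot change the underlying partition), $\zeta$ is not Kempe equivalent to any colouring with a different partition, and in particular not to any minimum colouring of $Y_r$. Therefore $\mathcal{C}_{3r}(Y_r)$ does not form a single Kempe class. This is essentially a one-line deduction once Lemma \ref{lem:YrKempeFrozen} is in hand.

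I do not expect any real obstacle here; the only point requiring a moment's care is confirming that $3r$ genuinely exceeds $\chi(Y_r)$ so that a second colouring with a different partition is available — this is where the hypothesis $r \ge 2$ is used, since for $r = 1$ one has $3r = 3 = \chi(\overline{C_6})$ and a separate argument (the explicit pair of colourings in Figure \ref{fig:prism}) is needed, as already noted in the remark preceding Lemma \ref{lem:YrKempeFrozen}. All the substantive content has been pushed into Lemma \ref{lem:YrKempeFrozen}, whose proof is asserted to be analogous to that of Lemma \ref{lem:DqKempeFrozen} (reading off connectivity of the two-colour subgraphs from the complement $\overline{Y_r}$ via Remark \ref{rem:Kfrozen}).
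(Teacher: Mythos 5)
Your proposal is correct and follows essentially the same route as the paper: note that $\chi(Y_r)=2r<3r$, so a minimum colouring gives a $3r$-colouring with a different partition into colour classes, and then apply the Kempe-frozenness of $\zeta$ (Lemma \ref{lem:YrKempeFrozen} together with Lemma \ref{lem:Kempefrozen}) to conclude that $\zeta$ is not Kempe equivalent to it. Your write-up is merely a little more explicit than the paper's about why a second colouring with a different partition exists.
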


\begin{proof}
Let $r \ge 2$. Similar to the proof of Lemma \ref{lem:DqNotKempClass}, note that $\chi(Y_r) < 3r$. By Lemma \ref{lem:YrKempeFrozen}, the colouring $\zeta$ of $Y_r$ is Kempe frozen and thus not Kempe equivalent to any minimum colouring of $Y_r$.  So the set of all $3r$-colourings of $Y_r$ does not form a Kempe class.  
\end{proof}

\begin{table} 
\footnotesize
    \centering
    \scalebox{0.88}{
    \begin{tabular}{|c|c|c|c|c|c|}
    \hline
        class & $q/r$ & $n$ & $\chi~~~$($=\omega$) & \# colours  & (\# colours in  \\  
        & &   & & in $K$-frozen & in $K$-frozen 
         \\
        & &  &     & colouring &  colouring) - $\chi$ 
         \\
          
         \hline
         & & & & &      \\
$D_q$ & $q \ge 2$ & $4q+2$ &  $(3q+2)/2$   & $n/2=2q+1$ &  $q/2$    \\
         & & & ~~~for even $q$;  &  & ~~~for even $q$;    \\
         & & & $(3q+3)/2$ &  & $(q-1)/2$    \\
         & & & ~~~for odd $q$  &  & ~~~for odd $q$    \\
& & & & &      \\
\hline
    & & & & &     \\
$Y_r$ & $r \ge 2 $ & $6r$ &  $2r$  & $3r$ &  $r$    \\
         & & & & &     \\
\hline    
    \end{tabular}}
    \caption{Parameters of $2K_2$-free graph classes}
    \label{tab:parameters}
\end{table}

We can now prove Theorem \ref{thm:2K2-free not KC}: 
    For all $p\ge 0$, there is a $k$-colourable 2$K_2$-free graph $G$ such that  $\mathcal{C}_{k+p}(G)$ does not form a Kempe class.

\begin{proof} 
For $p=0$, as noted above, $\overline{C_6}$ is a 3-chromatic graph and $\mathcal{C}_3(\overline{C_6})$ does not form a Kempe class.\\
For $p=1$, $D_2$ is a 4-chromatic graph and by Lemma \ref{lem:DqNotKempClass},  $\mathcal{C}_5({D_2})$ does not form a Kempe class.\\
For $p\ge 2$, $Y_p$ is a $2p$-chromatic graph and by Lemma \ref{lem:YrNotKempClass}, $\mathcal{C}_{3p}(Y_{p})$ does not form a Kempe class.
\end{proof}

In \cite{frozen}, the following operation was defined and Theorem \ref{thm:2K_2 increase} was proved.

\begin{operation}
\label{op:2K2}
Given a graph $G$ and nonadjacent vertices $x$ and $y$ in $G$, we define the following operation to create a new graph $G^\prime$. Define $G^\prime$ to be the graph $G$ together with two additional vertices  $u$ and $v$ and with edges $vx,~ xy$ and $yu$; join $u$ and $v$ to all vertices of $G-\{x,y\}$.
\end{operation}

Note that this operation corresponds to subdividing the edge $xy$ of $\overline{G}$, that is, replacing $xy$ by the path $x,u,v,y$.

Two sets $X$ and $Y$ of vertices of a graph $G$ are called {\em anticomplete} if no edge of $G$ has an end in each set and called {\em complete} if every vertex in one set is adjacent to every vertex in the other set. A universal vertex $w$ in a graph $G$ is a vertex which is adjacent to all other vertices of $G$.

\begin{theorem}[\cite{frozen}] \label{thm:2K_2 increase}
Let $G$ be a $k$-colourable graph with a $k$-colouring $\beta$ and a frozen $(k+1)$-colouring $\gamma$, and let $x$ and $y$ be nonadjacent vertices of $G$ such that $\beta(x) \neq \beta(y)$ and such that either 

\begin{itemize}
\item [(1)] $\gamma(x) \neq \gamma(y)$, or 
\item [(2)]	$\{x,y\}$ is a colour class of $\gamma$.
\end{itemize}

\noindent Then the graph $G^\prime$ of Operation \ref{op:2K2} is $(k+1)$-colourable and admits a frozen $(k+2)$-colouring.
\\Furthermore, 
\begin{itemize}
\item [(3)] if $G$ is $k$-chromatic, then $G^\prime$ is $(k+1)$-chromatic.
\item [(4)] if $G$ is $2K_2$-free and if in case (1), there is no edge $rs$ such that 
$\{r,s\}$ is anticomplete to $\{x,y\}$, then $G^\prime$ is $2K_2$-free. 
\end{itemize}
\end{theorem}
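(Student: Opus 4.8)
The plan is to establish the four assertions in turn. The $(k+1)$-colourability is immediate: colour $V(G)$ by $\beta$ and give both $u$ and $v$ the extra colour $k+1$; this is proper since $u\not\sim v$, since $\beta(x)\ne\beta(y)$, and since apart from $V(G)\setminus\{x,y\}$ the only neighbour of $u$ in $G'$ is $y$ and the only neighbour of $v$ is $x$. The assertion on chromaticity I would handle by passing to complements, the $2K_2$-freeness by a short structural argument, and the real work is the frozen $(k+2)$-colouring.

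For the frozen colouring, I would first record the key observation that, because $\gamma$ is frozen and $x\not\sim y$, neither $\{x\}$ nor $\{y\}$ is a colour class of $\gamma$: in case~(1), $y$ needs a neighbour coloured $\gamma(x)$, and if $x$ were the unique vertex of that colour this would force $x\sim y$; in case~(2) the class is $\{x,y\}$ itself. Hence each of $\gamma(x),\gamma(y)$ occurs on a vertex of $V(G)\setminus\{x,y\}$, so $V(G)\setminus\{x\}$ and $V(G)\setminus\{y\}$ each carry all $k+1$ colours of $\gamma$. In case~(1) I would keep $\gamma$ on $V(G)$ and set $\gamma'(u)=\gamma'(v)=k+2$: this is proper, and frozen because $u$ (neighbourhood $V(G)\setminus\{x\}$) and $v$ (neighbourhood $V(G)\setminus\{y\}$) see colours $1,\dots,k+1$, every vertex of $V(G)$ sees $k+2$ on $u$ or $v$, and the vertices of $V(G)$ retain all the witnesses they had under $\gamma$. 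In case~(2), with $c:=\gamma(x)=\gamma(y)$, I would keep $\gamma$ on $V(G)\setminus\{y\}$ and set $\gamma'(y)=k+2$, $\gamma'(u)=c$, $\gamma'(v)=k+2$; the colour classes of $c$ and $k+2$ become $\{x,u\}$ and $\{y,v\}$, which are independent as $x\not\sim u$ and $y\not\sim v$, so $\gamma'$ is a proper $(k+2)$-colouring using every colour, and freezing is checked vertex by vertex ($x$ sees $k+2$ on $y$ and $v$; $y$ sees $c$ on $x$ and $u$; $u$ sees $k+2$ on $y$; $v$ sees $c$ on $x$; an old vertex sees $c$ on $u$ and $k+2$ on $v$). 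I expect this to be the main obstacle: without the preliminary observation the ``recolour only $u,v$'' recipe fails, since a singleton class $\gamma(x)$ would leave $u$ with no witness for that colour, so pinning down that frozenness together with $x\not\sim y$ prevents this is the crux.

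For $(k+1)$-chromaticity when $G$ is $k$-chromatic, I would pass to complements: $\overline{G'}$ is $\overline{G}$ with the edge $xy$ subdivided into a path $x,u,v,y$, so $\chi(G')$ equals the clique-cover number $\theta(\overline{G'})\le k+1$. Suppose $\theta(\overline{G'})\le k$. In $\overline{G'}$ both $u$ and $v$ have degree $2$ with non-adjacent neighbourhoods ($\{x,v\}$ and $\{y,u\}$), so each clique of a $k$-clique cover meeting $\{u,v\}$ lies in $\{\{u\},\{u,x\},\{u,v\},\{v\},\{v,y\}\}$; going through the possibilities one is left with at most $k-1$ cliques covering $\overline{G}-xy$, or at most $k-2$ cliques covering one of $\overline{G-x}$, $\overline{G-y}$, $\overline{G-\{x,y\}}$, $\overline{G}-xy$. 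Each is impossible: deleting an edge cannot lower the clique-cover number, so $\theta(\overline{G}-xy)\ge\theta(\overline{G})=\chi(G)=k$; $\chi(G-x),\chi(G-y)\ge\chi(G)-1=k-1$; and $\chi(G-\{x,y\})\ge k-1$, since otherwise a $(k-2)$-colouring of $G-\{x,y\}$ extended by one new colour on both $x$ and $y$ (legitimate as $x\not\sim y$) would $(k-1)$-colour $G$. Hence $\chi(G')=k+1$.

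For $2K_2$-freeness, suppose $G'$ contains an induced $2K_2$ with edges $e_1,e_2$, the ends of $e_1$ anticomplete to the ends of $e_2$. If $u,v$ are both avoided, then either all four vertices lie in $V(G)$ and (as they cannot contain both $x$ and $y$, else the new edge $xy$ would be present) induce the same graph in $G$, contradicting $2K_2$-freeness of $G$; or one of $e_1,e_2$ is $xy$, forcing the other to be an edge of $G$ whose ends are anticomplete to $\{x,y\}$ — forbidden in case~(1) by hypothesis and impossible in case~(2), where every other vertex is adjacent to $x$ or $y$. If $u$ or $v$ is among the four vertices, use that the only non-neighbours of $u$ in $G'$ are $x,v$ and of $v$ are $y,u$: the anticompleteness condition pins the opposite edge to $\{x,v\}$ or $\{y,u\}$ and the partner of $u$ (or of $v$) to $y$ (or to $x$), which leaves the requirement that $x\not\sim y$ in $G'$, a contradiction. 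Thus $G'$ has no induced $2K_2$.
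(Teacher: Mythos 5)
Your proposal is correct. Note, however, that the paper does not prove this statement at all: Theorem~\ref{thm:2K_2 increase} is quoted from \cite{frozen}, and the only material the paper supplies is the pair of colouring constructions displayed after Theorem~\ref{thm:K2K_2 increase}. Your constructions coincide with those exactly --- in Case~(1) the class $\{u,v\}$ with the new colour, and in Case~(2) the partition into classes $\{x,u\}$ and $\{y,v\}$ (you merely permute which of the two gets the new colour label, which is immaterial). You also correctly isolate the one genuinely delicate point in the frozenness verification, namely that frozenness of $\gamma$ together with $x\not\sim y$ forbids $\{x\}$ or $\{y\}$ from being a singleton colour class, so that $u$ and $v$ still see the colours $\gamma(x)$ and $\gamma(y)$ respectively; without this the Case~(1) recipe would fail. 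Your arguments for (3) via clique covers of $\overline{G'}$ (using $\theta(\overline{G}-xy)\ge\theta(\overline{G})$, $\chi(G-x)\ge k-1$, and $\chi(G-\{x,y\})\ge k-1$) and for (4) via the fact that the only non-neighbours of $u$ in $G'$ are $x$ and $v$ (and symmetrically for $v$), together with the observation that in Case~(2) frozenness of $\gamma$ forces every vertex outside $\{x,y\}$ to be adjacent to $x$ or $y$, are complete and correct; they fill in verifications the paper leaves entirely to \cite{frozen}.
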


We now extend Theorem \ref{thm:2K_2 increase} to Kempe frozen colourings:

\begin{theorem} \label{thm:K2K_2 increase}
Let $G$ be a $k$-colourable graph with a $k$-colouring $\beta$ and a Kempe frozen $(k+1)$-colouring $\gamma$, and let $x$ and $y$ be nonadjacent vertices of $G$ such that $\beta(x) \neq \beta(y)$ and such that either 

\begin{itemize}
\item [(1)] $\gamma(x) \neq \gamma(y)$, or 
\item [(2)]	$\{x,y\}$ is a colour class of $\gamma$.
\end{itemize}

\noindent Then the graph $G^\prime$ of Operation \ref{op:2K2} is $(k+1)$-colourable and admits a Kempe frozen $(k+2)$-colouring. So the set $\mathcal{C}_{k+2}(G^\prime)$ of all $(k+2)$-colourings of $G^\prime$ does  not form a Kempe class.
\\Furthermore, 
\begin{itemize}
\item [(3)] if $G$ is $k$-chromatic, then $G^\prime$ is $(k+1)$-chromatic.
\item [(4)] if $G$ is $2K_2$-free and if in case (1), there is no edge $rs$ such that 
$\{r,s\}$ is anticomplete to $\{x,y\}$, then $G^\prime$ is $2K_2$-free. 
\end{itemize}
\end{theorem}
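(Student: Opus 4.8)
The plan is to leverage Theorem~\ref{thm:2K_2 increase} as much as possible: since a Kempe frozen colouring is in particular frozen (by the Observation), the colouring $\gamma$ satisfies the hypotheses of Theorem~\ref{thm:2K_2 increase}, so $G'$ is already known to be $(k+1)$-colourable and to admit a \emph{frozen} $(k+2)$-colouring; moreover statements (3) and (4) are identical to those in Theorem~\ref{thm:2K_2 increase} and require no new argument. Thus the only genuinely new content is to verify that the $(k+2)$-colouring $\gamma'$ of $G'$ produced by the operation is in fact \emph{Kempe} frozen, and then the final sentence ($\mathcal{C}_{k+2}(G')$ is not a Kempe class) will follow from Lemma~\ref{lem:Kempefrozen} together with the fact that $G'$ has at least two $k{+}2$-colourings with different partitions (the colouring $\beta$ extended gives $\le k+1$ colours used in a genuinely different way, or more simply $G'$ has a $(k+1)$-colouring which, padded with the extra colour on a single vertex, partitions differently from the frozen one).

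First I would recall/recover the explicit description of $\gamma'$ from \cite{frozen}: on $V(G)$ it agrees with $\gamma$ (in case (1)) or with $\gamma$ except possibly recolouring one of $x,y$ (in case (2)), and the two new vertices $u,v$ receive the new colour $k+2$ and one of the colours $\gamma(x),\gamma(y)$, arranged so that $u,v$ get distinct colours — with $u$ adjacent to $y$ and to all of $V(G)\setminus\{x,y\}$, and $v$ adjacent to $x$ and to all of $V(G)\setminus\{x,y\}$. Then I would check the Kempe-frozen condition pair of colours by pair of colours. The cleanest way is to pass to the complement, as in Remark~\ref{rem:Kfrozen}: the operation on $G$ corresponds to subdividing the edge $xy$ of $\overline{G}$ into a path $x,u,v,y$, and ``Kempe frozen $(k+2)$-colouring of $G'$'' corresponds to ``Kempe frozen $(k+2)$-clique-partition of $\overline{G'}$'', i.e.\ for every two clique classes the induced subgraph of $\overline{G'}$ has connected complement, equivalently (back in $G'$) the union of the two colour classes induces a connected subgraph of $G'$.

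The core of the verification splits into three kinds of colour pairs $\{a,b\}$ in $\gamma'$. (i) If $a,b$ are both ``old'' colours of $\gamma$ and neither is the colour of $u$ or $v$: then the union of classes $a$ and $b$ is unchanged from $\gamma$ and was connected since $\gamma$ is Kempe frozen. (ii) If one of $a,b$ is the new colour $k+2$: that class is $\{u,v\}$ (or $\{u,v\}$ together with whatever else got recoloured in case (2) — but in the standard construction nothing else does), and since $u,v$ are each adjacent to every vertex of $V(G)\setminus\{x,y\}$, the class $\{u,v\}$ is complete to the other colour class unless that class is $\{x,y\}$ (case (2)), in which case $v\!-\!x\!-\!y\!-\!u$ is a path through all four vertices; either way the union induces a connected subgraph. (iii) If one of $a,b$, say $a$, equals $\gamma'(u)$ or $\gamma'(v)$ (an old colour, say $a=\gamma(x)$ assigned also to $v$, with the symmetric case for $u$ handled the same way): the $a$-class in $\gamma'$ is the old $a$-class of $\gamma$ plus the vertex $v$ (or minus/plus $x$ in case (2) bookkeeping), and one must see that adding $v$, which is adjacent to all of $V(G)\setminus\{x,y\}$, to a subgraph that was already connected keeps it connected, and that $v$ also connects to the $b$-class whenever $b\neq$ the relevant colour. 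This is where the hypotheses actually bite, and it is the step I expect to be the main obstacle: one has to track carefully, in both cases (1) and (2), exactly which old colour $u$ and $v$ receive and make sure the single possibly-missing adjacency (namely $v$ is \emph{not} adjacent to $x$ or $y$ in general, only to $x$) does not disconnect anything — the conditions $\beta(x)\neq\beta(y)$ and $\gamma(x)\neq\gamma(y)$ / $\{x,y\}$ a colour class are precisely what guarantee that the two leftover vertices $x,y$ are each still hit by the required colours through their \emph{old} neighbourhoods (because $\gamma$ restricted to $G$ was Kempe frozen, so in particular frozen, so $x$ has an old neighbour in the $b$-class for every $b\neq\gamma(x)$, and likewise for $y$). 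Once (i)–(iii) are checked, $\gamma'$ is Kempe frozen; (3) and (4) are quoted verbatim from Theorem~\ref{thm:2K_2 increase}, and the non-Kempe-class conclusion follows by the Observation and Lemma~\ref{lem:Kempefrozen}.
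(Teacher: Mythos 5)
Your overall strategy is the same as the paper's: quote Theorem~\ref{thm:2K_2 increase} for the $(k+1)$-colourability and for (3) and (4), exhibit the explicit $(k+2)$-colouring $\gamma'$, verify Kempe-frozenness one pair of colour classes at a time, and conclude via Lemma~\ref{lem:Kempefrozen}. However, two concrete points in your verification are wrong or missing. First, your description of $\gamma'$ is not correct in case (1): since $u$ and $v$ are each adjacent to \emph{all} of $V(G)\setminus\{x,y\}$, neither can inherit an old colour of $\gamma$ there (it would clash with the rest of that colour class), so ``arranged so that $u,v$ get distinct colours'' cannot be realised; in case (1) the only option is to make $\{u,v\}$ itself the new colour class, which is what the paper does. (In case (2) the correct recolouring gives $x$ and $u$ the new colour and gives $v$ the colour $\gamma(y)$, so the new class is $\{x,u\}$, not $\{u,v\}$ --- your step (ii) assumes the new class is always $\{u,v\}$.)

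Second, and more importantly, in your step (ii) the assertion that $\{u,v\}$ is ``complete to the other colour class unless that class is $\{x,y\}$'' is false precisely for the two classes that matter in case (1): $u$ is not adjacent to $x$ and $v$ is not adjacent to $y$, so $\{u,v\}$ is not complete to the class $I(x)$ containing $x$ nor to the class $I(y)$ containing $y$. Since $u$ is adjacent to neither $x$ nor $v$, the union $\{u,v\}\cup I(x)$ is connected only if $I(x)$ contains a vertex other than $x$. Ruling out $I(x)=\{x\}$ requires an argument you do not supply: in a Kempe frozen colouring a singleton colour class must be a universal vertex (its union with any other class must induce a connected graph), and $x$ is not universal because it is non-adjacent to $y$; this is exactly the step the paper makes. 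Your appeal to frozenness (``$x$ has an old neighbour in every other class'') concerns a different pair of classes and does not fill this hole. The remainder of your plan --- case (2) via ``$u$ is complete to $I$ and $x$ has a neighbour in $I$'', the induced path on $\{v,x,y,u\}$, and the final deduction from Lemma~\ref{lem:Kempefrozen} --- is sound.
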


A $(k+1)$-colouring of $G^\prime$ is the colouring $\beta$ of $G$  extended by making $\{u,v\}$ a new colour class. A new Kempe  frozen $(k+2)$-colouring $\gamma^{\prime}$ of $G^\prime$ is obtained from the Kempe  frozen colouring $\gamma$ of $G$ by 
\begin{itemize}
\item In Case (1), making 
$\{u,v\}$ a new colour class.
\item In Case (2), give $x$ a new colour and assign this colour to $u$ as well; give $v$ the colour $\gamma(y)$.   
\end{itemize}

\begin{proof}
Assume the hypotheses. 
We only need to prove that $\gamma^{\prime}$ is a Kempe frozen $(k+2)$-colouring of $G^\prime$. \\ 
In Case (1): Since $\gamma$ is a Kempe frozen colouring of $G$, every pair of colour classes induces a connected subgraph, and this remains true when the edge $xy$ is added. 

We only need to check that any colour class of $\gamma$ together with  $\{u,v\}$ induces a connected graph. That is certainly true for any colour class of $\gamma$ other than the colour class containing $x$ and the colour class containing $y$ since both $u$ and $v$ are complete to $G \setminus \{x,y\}$. 
In any Kempe frozen colouring, if there is a colour class consisting of a single vertex, say $w$, then $w$ must be a universal vertex. 
In $G$, $x$ and $y$ are nonadjacent, so neither is a universal vertex, and thus there is vertex $x^\prime$ of $G$ different from $x$ in the colour class $I(x)$ of $\gamma$ containing $x$ and a vertex $y^{\prime}$ different from $y$ in the colour class $I(y)$ of $\gamma$ containing $y$. 
In $G^{\prime}$, $u$ and $v$ are complete to $x^\prime$ and to $y^\prime$, and $x$ is adjacent to $v$, and $y$ is adjacent to $u$, so $I(x) \cup \{u,v\}$ induces a connected subgraph, and $I(y) \cup \{u,v\}$ induce a connected subgraph.
\\
In Case (2): Since $\gamma$ is a Kempe frozen colouring of $G$, any two colour classes of $\gamma$ induce a connected subgraph of $G$. Adding the edge $xy$ to $G$ does not change this, although $\{x,y\}$ is no longer a colour class. Thus any two colour classes of $\gamma^{\prime}$ induce a connected subgraph of $G^{\prime}$ except possibly if one of them is $\{x,u\}$ or $\{y,v\}$.   
The set $\{u,y,x,v\}$ induced a $P_4$ in $G^{\prime}$. 
Consider a colour class $I$ of $\gamma$ different from $\{x,y\}$. Since $\{x,y\} \cup I$ induces a connected subgraph of $G$, there must be a vertex $z$ of $I$ adjacent to both $x$ and $y$, and every other vertex of $I$ must be adjacent to at least one of $x$ and $y$. 
Now consider the union of the vertices of the colour classes $\{x,u\}$ and $I$ of $G^{\prime}$. Vertex $u$ is adjacent to every vertex of $I$ and vertex $z \in I$ is adjacent to $x$. Thus the subgraph induced by $\{x,u\} \cup I$ is connected in $G^{\prime}$. Analogously, the subgraph induced by $\{y,v\} \cup I$ is connected in $G^{\prime}$. Thus $\gamma^{\prime}$ is Kempe frozen.
\end{proof}

\section{Open problems}



By Theorem \ref{thm:2K2-free not KC}, there is no constant $c\ge 0$ such that for every 2$K_2$-free $k$-colourable graph $G$ the set of all ($k$+$c$)-colourings of $G$ form a Kempe class. For $k \ge 3$, graph $H_k$ of \cite{ReedConj} is a 3-chromatic triangle-free graph on $5k$ vertices and admits a Kempe frozen $k$-colouring. We ask the following.

\begin{question}
Where $H$ is either $C_4$ or $3K_1$, 
    is there a constant $c\ge 0$ such that for every $k$-colourable $H$-free graph $G$ the set of all ($k$+$c$)-colourings of $G$ forms a Kempe class?
\end{question}

\end{document}